\newtheorem*{theorem*}{Theorem}
\newtheorem{theorem}{Theorem}[section]
\newtheorem{lemma}[theorem]{Lemma}
\newtheorem{proposition}[theorem]{Proposition}
\newtheorem*{proposition*}{Proposition}
\theoremstyle{definition}
\newtheorem{definition}[theorem]{Definition}
\theoremstyle{remark}
\newtheorem{remark}[theorem]{Remark}
\DeclareMathAlphabet{\mathpzc}{OT1}{pzc}{m}{it}
\def\Id{\operatorname{Id}}
\def\Im{\operatorname{Im}}
\def\st{\operatorname{st}}
\def\PU{\operatorname{PU}}
\def\U{\operatorname{U}}
\def\Re{\operatorname{Re}}
\def\Aut{\operatorname{Aut}}
\def\z{\operatorname{z}}
\def\dist{\operatorname{dist}}
\def\ker{\operatorname{Ker}}
\def\cosh{\operatorname{cosh}}
\def\dist{\operatorname{dist}}
\def\c{\mathbb C}
\def\p{\mathbb P}
\def\r{\mathbb R}
\def\z{\mathbb Z}
\def\chp{H_{\c}^2}
\def\bchp{{\partial\chp}}
\def\calC{\mathcal{C}}
\def\calE{\mathcal{E}}
\def\calN{\mathcal{N}}
\def\calP{\mathcal{P}}
\def\calS{\mathcal{S}}
\def\calT{\mathcal{T}}
\def\al{{\alpha}}
\def\Ga{{\Gamma}}
\def\ga{{\gamma}}
\def\La{{\Lambda}}
\def\Si{{\Sigma}}
\def\si{{\sigma}}
\def\Re{\operatorname{Re}}
\def\Im{\operatorname{Im}}
\def\EE{E}
\def\st{\,\,\big|\,\,}
\def\<{\langle}
\def\>{\rangle}
\let\ge=\geqslant
\let\le=\leqslant
\font\msbm=msbm10
\def\semiprod{\hbox{\msbm\char111}}
\def\defit{\it}
\title[Complex Hyperbolic Triangle Groups of Type ${[}m,m,0;n_1,n_2,2{]}$]{Complex Hyperbolic Triangle Groups\\ of Type $[m,m,0;n_1,n_2,2]$}
\author[Sam Povall]{Sam Povall}
\address{School of Engineering\\ University of Liverpool\\ Liverpool L69~7ZL, United Kingdom}
\email{S.Povall@liverpool.ac.uk}
\author[Anna Pratoussevitch]{Anna Pratoussevitch}
\address{Department of Mathematical Sciences\\ University of Liverpool\\ Liverpool L69~7ZL, United Kingdom}
\email{annap@liverpool.ac.uk}
\subjclass[2010]{Primary 51M10; Secondary 32M15, 22E40, 53C55}
\keywords{complex hyperbolic geometry, triangle groups}
\DeclareMathAlphabet{\mathpzc}{OT1}{pzc}{m}{it}
\begin{document}
\maketitle
\begin{abstract}
In this paper we study discreteness of complex hyperbolic triangle groups of type \([m,m,0; n_1, n_2, 2]\), i.e. groups of isometries of the complex hyperbolic plane generated by three complex reflections of orders \(n_1, n_2, 2\) in complex geodesics with pairwise distances \(m,m,0\). For fixed \(m\), the parameter space of such groups is of real dimension one. We determine the possible orders for \(n_1\) and \(n_2\) and also intervals in the parameter space that correspond to discrete and non-discrete triangle groups.
\end{abstract}
\section{Introduction}
\noindent
Complex hyperbolic triangle groups are groups of isometries of the complex hyperbolic plane
generated by three complex reflections in complex geodesics. We will focus on the case of ultra-parallel groups, that is, the case where the complex geodesics are pairwise disjoint. 
\\
\linebreak
Unlike real reflections, complex reflections can be of arbitrary order. Work on higher order reflections has been discussed by Parker and Paupert \cite{parkpau} and Pratoussevitch \cite{anna}. If an ultra-parallel complex hyperbolic triangle group is generated by reflections of orders $n_1,n_2,n_3$ in complex geodesics $C_1,C_2,C_3$ with the distance between~$C_{k-1}$ and~$C_{k+1}$ equal to~$m_k$ for~$k=1,2,3$, then we say that the group is of type $[m_1,m_2,m_3;n_1,n_2,n_3]$. 
\\
\linebreak
In this paper, we will study discreteness of ultra-parallel complex hyperbolic triangle groups of type $[m,m,0;n_1,n_2,2]$, i.e.\ one reflection of order~$n_1$, one of order \(n_2\), and one of order~$2$. The fixed point sets of order~$n_1$ and \(n_2\) reflections intersect on the boundary of the complex hyperbolic plane ($m_3=0$) and the other two distances between fixed point sets coincide ($m_1=m_2$). Ultra-parallel triangle groups of types $[m,m,0;2,2,2]$ and $[m,m,2m;2,2,2]$ have been considered in \cite{WyssGall}, groups of type $[m_1,m_2,0;2,2,2]$ have been considered in \cite{andyanna} and \cite{andy}, and groups of type $[m,m,0;3,3,2]$ have been considered in~\cite{pov1} and~\cite{povprat}. 
\\
\linebreak
To determine the possible orders~$n_1$ and~$n_2$ of the complex reflections for the groups of type $[m,m,0;n_1,n_2,2]$ to be discrete,
we use the work of Hersonsky and Paulin~\cite{hersonpaul}.
We combine several of their results which give rise to the following theorem:

\begin{theorem}
\label{reforders}
An ultra-parallel complex hyperbolic triangle group of type 
\\
$[m_1,m_2,0;n_1,n_2,n_3]$ can only be discrete if the unordered pair of orders of the complex reflections~$\iota_1$ and~$\iota_2$ is one of
\[
  \{2,2\},\{2,3\},\{2,4\},\{2,6\},\{3,3\},\{3,6\}~\mbox{or}~\{4,4\}.
\]
\end{theorem}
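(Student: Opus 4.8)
The plan is to localise the problem at the boundary. Since $m_3=0$, the mirrors $C_1$ and $C_2$ of the reflections $\iota_1$ and $\iota_2$ are asymptotic: they are disjoint in $\chp$ but share a single point $p\in\bchp$. Consequently both $\iota_1$ and $\iota_2$ fix $p$, so $\langle\iota_1,\iota_2\rangle$ lies in the stabiliser $\operatorname{Stab}(p)\le\PU(2,1)$, and a necessary condition for the whole triangle group to be discrete is that $\langle\iota_1,\iota_2\rangle$ be discrete. I would therefore pass to the Heisenberg model: normalising $p$ to be the point at infinity identifies $\bchp\setminus\{p\}$ with the Heisenberg group $\c\times\r$ and $\operatorname{Stab}(p)$ with the group of Heisenberg similarities. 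In these coordinates $C_1$ and $C_2$ become two distinct vertical chains over points $z_1\ne z_2\in\c$, and $\iota_k$ becomes the Heisenberg rotation through angle $2\pi/n_k$ about the axis over $z_k$.

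Next I would pin down the isometry type of $g:=\iota_1\iota_2$. Because $C_1$ and $C_2$ meet only at $p$, the product $g$ fixes $p$ and no other boundary point, so one checks that $g$ is ellipto-parabolic; reading off the eigenvalues of the two complex reflections shows that its rotational ($\U(1)$) part equals $e^{2\pi i(1/n_1+1/n_2)}$. Write $c$ for the order of this rotational part, so that $g^{c}$ is a genuine parabolic. The first constraint now comes from the crystallographic restriction: $\langle\iota_1,\iota_2\rangle$ is discrete and contains the parabolic $g^{c}$ together with the finite-order rotations $\iota_1,\iota_2$, so its rotational holonomy is a finite cyclic group preserving a lattice in $\c$, which forces $\operatorname{lcm}(n_1,n_2)\in\{1,2,3,4,6\}$ and in particular $n_1,n_2\in\{2,3,4,6\}$.

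The decisive step is to sharpen this coarse restriction, and here I would invoke the results of Hersonsky and Paulin \cite{hersonpaul} on the structure of discrete groups containing parabolics. The ellipto-parabolic $g$ and the elliptics $\iota_1,\iota_2$ cannot be placed independently: the vertical (central) translation length of $g^{c}$ is rigidly determined by $n_1$, $n_2$ and the Heisenberg geometry, and for the resulting rotation-by-parabolic cusp group to be discrete this length must be compatible with the lattice fixed by the holonomy $C_c$. Carrying out this comparison should show that the rotation of $g$ must exactly close the cusp up, i.e. that $(n_1,n_2,c)$ satisfies the Euclidean triangle relation
\[
\frac{1}{n_1}+\frac{1}{n_2}+\frac{1}{c}=1,\qquad c\in\{2,3,4,6,\infty\}.
\]
Solving this Diophantine equation is then routine: its only solutions in positive integers (allowing $c=\infty$) are the Euclidean triples $(2,2,\infty)$, $(2,3,6)$, $(2,4,4)$ and $(3,3,3)$, and collecting the unordered pairs $\{n_1,n_2\}$ that occur reproduces exactly the seven possibilities in the statement.

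The main obstacle is precisely this last promotion, from the crystallographic restriction to the Euclidean relation. The coarse argument of the second paragraph leaves $\{6,6\}$ as a spurious candidate — its orders are individually crystallographic, with $\operatorname{lcm}=6$, and the two associated planar rotations even generate a discrete wallpaper group of type $\mathrm{p}6$ — so no purely two-dimensional crystallographic reasoning can exclude it. What rules it out is that $1/6+1/6=2/3$ is not of the form $1-1/c$, so the cusp cannot close up in $\chp$; capturing this requires the quantitative control on the vertical translation length furnished by \cite{hersonpaul}. The analogous failures of $\{3,4\}$ and $\{4,6\}$ are handled by the same relation, and verifying that the asserted isometry type of $g$ and the value of its rotational part hold uniformly across the parameter interval is the one computation I expect to demand genuine care.
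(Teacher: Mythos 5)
Your skeleton matches the paper's: restrict to the stabiliser of the shared boundary point, identify $\langle\iota_1,\iota_2\rangle$ with a group of Heisenberg isometries, and reduce to Euclidean crystallography enriched by the central direction. Your final Diophantine condition $1/n_1+1/n_2+1/c=1$ is equivalent to what the paper actually extracts, namely that $\{n_1,n_2\}$ must occur as two of the cone orders of one of the Euclidean $2$-orbifolds $S^2(2,2,2,2)$, $S^2(3,3,3)$, $S^2(2,4,4)$, $S^2(2,3,6)$, so the two routes land in the same place, and you are right that the whole difficulty is concentrated in excluding $\{6,6\}$ (and in distinguishing ``each order occurs in some wallpaper group'' from ``the pair occurs inside one Euclidean signature'').

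The gap is in how you propose to cross that last step. You attribute the decisive ingredient --- that discreteness pins down the central translation length of $g^{c}$ and thereby forces the Euclidean relation --- to \cite{hersonpaul}, but the results of Hersonsky--Paulin that are actually available, and that the paper uses, are of a different kind: Proposition~\ref{herpaul} (a discrete cocompact subgroup of $\calN$ projects to a lattice in $\c$, which is also what licenses your ``crystallographic restriction'' in the first place) and the index bound $I_2=6$ of Theorem~\ref{indexconstant} and Proposition~\ref{index}. Neither says anything about the vertical translation length of an ellipto-parabolic, so ``carrying out this comparison should show that the rotation of $g$ must exactly close the cusp up'' is at present an assertion, not an argument. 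As you yourself observe, the planar projection for $\{6,6\}$ is an honest $\mathrm{p}6$, so whatever excludes it must be a genuinely three-dimensional computation: one has to compare the vertical defects of \emph{several} distinct relators of $\mathrm{p}6$ (not just $(j_1j_2)^c$), or equivalently analyse the central extension $1\to\z\to E\to Q\to1$ and the torsion lifts over the various cone points; none of this is sketched, and it is exactly the content your plan still owes. The paper closes the argument instead by combining the index bound with the classification of Euclidean $2$-orbifolds and reading off the admissible pairs of cone orders; your triangle-relation reformulation is cleaner to state, but as written the step that kills $\{6,6\}$ is the one step you have not supplied.
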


\noindent
The deformation space of groups of type $[m,m,0;n_1,n_2,2]$ for a given $m$ is of real dimension one. A group is determined up to an isometry by the angular invariant $\alpha\in[0,2\pi]$, see section~\ref{sec-background}. The main aim is to determine an interval in this one-dimensional deformation space such that for all values of the angular invariant in this interval the corresponding triangle group is discrete. The main result of the paper is the following proposition:

\begin{proposition}
\label{prop-discr}
Let $[m_1,m_2,m_3;n_1,n_2,n_3]$ be the type of a complex hyperbolic triangle group~$\Ga$.
Assume that $m_1=m_2=m$, $m_3=0$, $n_1\le n_2$ and~$n_3=2$.
Then $\Ga$ is discrete if 
\[\cosh(m/2)\ge\frac{1}{\sin(\pi/n_2)}\]
and the angular invariant~$\al$ of~$\Ga$ satisfies $\text{\rm ctan}(\al/2)\le C$, where
\begin{center}
\def\arraystretch{1.5}
\begin{tabular}{| c || c | c | c | c | c | c | c | c |}
\hline
$(n_1,n_2)$ & $(3,3)$ & $(2,3)$ & $(2,6)$ & $(3,6)$ & $(2,4)$ & $(4,4)$ \\
\hline
\hline
$C$ {\rm(exact value)} & $1/\sqrt{3}$ & $2\sqrt{2}-\sqrt{3}$ & $(4-\sqrt{5})/\sqrt{3}$ & $1/\sqrt{3}$ & $1$ & $2-\sqrt{3}$ \\
\hline
$C$ {\rm(approx value)} & $0{.}577$ & $1{.}096$ & $1{.}018$ & $0{.}577$ & $1$ & $0{.}268$ \\
\hline
\end{tabular}
\end{center}
\end{proposition}





\noindent
To prove this proposition, we use a version of Klein's combination theorem, adapted to the configurations in question. Two of the generating reflections share a fixed point on the boundary of the complex hyperbolic plane. We show that the ultra-parallel triangle group satisfies a compression property by carefully studying the structure of the stabilizer of this fixed point and of its subgroup of Heisenberg translations.
\\
\linebreak
On the other hand, we obtain the following non-discreteness results by applying theorems from \cite{parkerford} and \cite{parkershimizu}:

\begin{proposition}
\label{prop-nondiscr}
Let $[m_1,m_2,m_3;n_1,n_2,n_3]$ be the type of a complex hyperbolic triangle group~$\Ga$.
Assume that $m_3=0$, and~$n_3=2$.
Let $\al$ be the angular invariant of~$\Ga$.
Then $\Ga$ is non-discrete if
\[4\cosh\left(\frac{m_1}{2}\right)\cosh\left(\frac{m_2}{2}\right)\sin^2\left(\frac{\alpha}{2}\right)+\left(\cosh\left(\frac{m_1}{2}\right)-\cosh\left(\frac{m_2}{2}\right)\right)^2<\frac{2}{\nu_1}\]
and
\[4\cosh\left(\frac{m_1}{2}\right)\cosh\left(\frac{m_2}{2}\right)\sin^2\left(\frac{\alpha}{2}\right)+\left(\cosh\left(\frac{m_1}{2}\right)-\cosh\left(\frac{m_2}{2}\right)\right)^2\ne\frac{2\cos(\pi/q)}{\nu_1},\]
where $\nu_1$ and $q$ are given by
\begin{center}
\def\arraystretch{1.5}
\begin{tabular}{| c || c | c | c | c | c | c | c | c}
\hline
$(n_1,n_2)$ & $(3,3)$ & $(2,3)$ & $(2,6)$ & $(3,6)$ & $(2,4)$ & $(4,4)$ \\
\hline
\hline
$\nu_1$ & $6\sqrt{3}$ & $24\sqrt{3}$ & $4\sqrt{3}$ & $2\sqrt{3}$ & $16$ & $4$ \\
\hline
$q\in\z$ & $q\geq 6$ & $q\geq 6$ & $q\geq 4$ & $q\geq 3$ & $q\geq 3$ & $q\geq 4$ \\
\hline
\end{tabular}
\end{center}
\end{proposition}

\bigskip\noindent
Combining the results of Propositions~\ref{prop-discr} and~\ref{prop-nondiscr}, we see that there is a gap between the intervals of discreteness and non-discreteness. This is illustrated in Figure~\ref{fig-gap} for the case \([m,m,0; 2,3,2]\). The figure shows the $(m,\al)$-space. The light grey box corresponds to discrete groups (Proposition~\ref{prop-discr}). The solid and dotted black areas correspond to non-discrete groups (Proposition~\ref{prop-nondiscr}).
\begin{figure}[h]
\begin{center}
\begin{tikzpicture}
\path[draw] (0.8,1)--(11.3,1);
\path[draw] (0.8,7.014)--(11,7.014);
\path[draw] (1,1)--(1,7.5);
\path[draw] (1,4)--(0.8,4);
\path[draw] (11.1,1.2)--(11.3,1);
\path[draw] (11.1,0.8)--(11.3,1);
\path[draw] (0.8,7.3)--(1,7.5);
\path[draw] (1.2,7.3)--(1,7.5);
\node at (1,7.8) {$\alpha$};
\node at (11.6,1) {$m$};
\node at (0.6,4) {$\pi$};
\node at (0.6,7.014) {$2\pi$};
\node at (0.5,1) {$0$};
\path[draw, dashed] (1,5)--(3,5);
\path[draw, dashed] (1,3)--(3,3);
\node at (0.6,5) {$\frac{4\pi}{3}$};
\node at (0.6,3) {$\frac{2\pi}{3}$};
\path[draw, dashed] (3,0.8)--(3,3);
\node at (3,0.5) {$\mbox{log}_e(3)$};
\path[draw] (3,5)--(11,5);
\path[draw] (3,3)--(11,3);
\path[draw] (3,3)--(3,5);
\draw[fill= lightgray]  (3,3) -- (11,3) -- (11,5) -- (3,5) -- cycle;
\draw[fill=darkgray,darkgray] (1,6.45) .. controls (3,6.4) .. (11,6.9);
\path[draw, loosely dotted] (1,6.2) .. controls (3,6.15) .. (11,6.65);
\path[draw, dotted] (1,6.25) .. controls (3,6.2) .. (11,6.7);
\path[draw, loosely dotted] (1,6.3) .. controls (3,6.25) .. (11,6.75);
\path[draw, dotted] (1,6.35) .. controls (3,6.3) .. (11,6.8);
\path[draw, loosely dotted] (1,6.4) .. controls (3,6.35) .. (11,6.85);
\draw (1,6.449) .. controls (3,6.395) .. (11,6.895);
\draw[fill=darkgray,darkgray]  (1.015,6.45) -- (11,6.9) -- (11,7) -- (1.015,7) -- cycle;
\draw[fill=darkgray,darkgray] (1,1.55) .. controls (3,1.6) .. (11,1.1);
\path[draw, loosely dotted] (1,1.8) .. controls (3,1.85) .. (11,1.35);
\path[draw, dotted] (1,1.75) .. controls (3,1.8) .. (11,1.3);
\path[draw, loosely dotted] (1,1.7) .. controls (3,1.75) .. (11,1.25);
\path[draw, dotted] (1,1.65) .. controls (3,1.7) .. (11,1.2);
\path[draw, loosely dotted] (1,1.6) .. controls (3,1.65) .. (11,1.15);
\draw (1,1.551) .. controls (3,1.601) .. (11,1.101);
\draw[fill=darkgray,darkgray]  (1.015,1.55) -- (11,1.1) -- (11,1.015) -- (1.015,1.015) -- cycle;
\end{tikzpicture}
\end{center}
\caption{Discreteness and non-discreteness results in the $(m,\al)$-space.}
\label{fig-gap}
\end{figure}
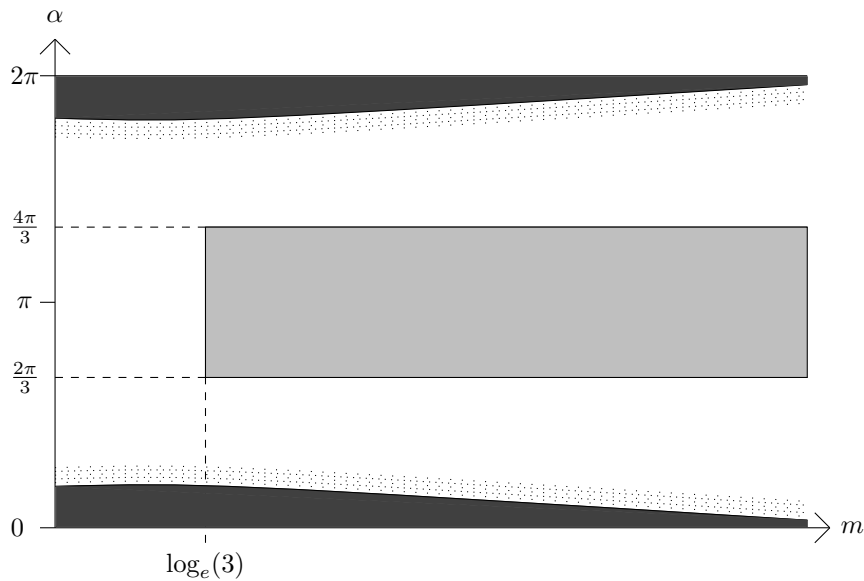
\noindent
Ultra-parallel complex hyperbolic triangle groups of type $[m,m,0;n_1,n_2,2]$ with orders $(n_1,n_2)$ other than $(2,2)$ were first considered in \cite{pov1} and~\cite{povprat}.
In the case $(n_1,n_2)=(3,3)$,
the result in Proposition~\ref{prop-discr} repeats the result of~\cite{povprat}, 
while the result in Proposition~\ref{prop-nondiscr} improves the result of~\cite{povprat}.
\\
\linebreak
The paper is organised as follows:
In section~\ref{sec-background} we discuss the background information on complex hyperbolic and Heisenberg geometry. We then introduce the standard parametrisation for ultra-parallel $[m_1,m_2,0;n_1,n_2,n_3]$-triangle groups in section~\ref{sec-param}.
In section~\ref{ordreflc}, we conclude the possible orders of the complex reflections for the groups of type $[m_1,m_2,0;n_1,n_2,n_3]$ to be discrete.
Following this,
in section~\ref{sec-compression} we use the compression property to derive a discreteness condition for $[m_1,m_2,0;n_1,n_2,n_3]$-groups.
In section~\ref{param} we specialise the standard parametrisation to the case of ultra-parallel $[m,m,0;n_1,n_2,2]$-triangle groups.
In section~\ref{heitran} we study the structure of the stabilizer of the intersection points, that is, the intersection of the fixed point sets of orders \(n_1\) and \(n_2\) with the boundary of the complex hyperbolic plane.
In section~\ref{disres} we use the discreteness conditions from section~\ref{sec-compression} to give a proof of Proposition~\ref{prop-discr}.
Finally in section~\ref{nondisres} we contrast the discreteness results with non-discreteness results and prove Proposition \ref{prop-nondiscr}.

\begin{remark}
We use the following notation: For group elements~$A$ and~$B$, their commutator is $[A,B]=A^{-1}B^{-1}AB$.
\end{remark}

\section{Background}
\label{sec-background}
\noindent
This section will give a brief introduction to complex hyperbolic geometry, for further details see \cite{goldman,parker2003}.

\subsection{Complex hyperbolic plane:}
Let $\c^{2,1}$ be the $3$-dimensional complex vector space
equipped with a Hermitian form $\<\cdot,\cdot\>$ of signature $(2,1)$,
e.g.
\[\<z,w\>=z_1\bar{w}_1+z_2\bar{w}_2-z_3\bar{w}_3.\]
If $z\in\c^{2, 1}$ then we know that $\<z, z\>$ is real.
Thus we can define subsets $V_-$, $V_0$ and $V_+$ of $\c^{2,1}$ as follows
\begin{align*}
  V_-&=\{z\in\c^{2,1}\st\<z,z\><0\},\\
  V_0&=\{z\in\c^{2,1}\backslash\{0\}\st\<z,z\>=0\},\\
  V_+&=\{z\in\c^{2,1}\st\<z,z\>>0\}.
\end{align*}
We say that $z\in\c^{2,1}$ is {\defit negative\/}, {\defit null\/} or {\defit positive\/} if $z$ is in $V_-$, $V_0$ or $V_+$ respectively. Define a projection map $\p$ on the points of $\c^{2,1}$ with $z_3\ne0$ as 
\[\p : z=\begin{bmatrix} z_1\\ z_2\\ z_3\end{bmatrix}\mapsto\begin{pmatrix} z_1/z_3\\ z_2/z_3\end{pmatrix}\in\p(\c^{2,1}).\]
That is, provided $z_3\ne0$, 
\[z=(z_1, z_2, z_3)\mapsto[z]=[z_1:z_2:z_3]=\left[\frac{z_1}{z_3}:\frac{z_2}{z_3}:1\right].\]
The {\defit projective model\/} of the complex hyperbolic plane is defined to be the collection of negative lines in $\c^{2,1}$
and its boundary is defined to be the collection of null lines.
That is
\[\chp=\p(V_-)\quad\text{and}\quad\bchp=\p(V_0).\]
The metric on $\chp$, called the {\defit Bergman metric\/}, is given by the distance function~$\rho$ defined by the formula
\[
  \cosh^2\left(\frac{\rho([z],[w])}{2}\right)
  =\frac{\langle{z, w\rangle}\langle{w, z\rangle}}{\langle{z, z\rangle}\langle{w, w\rangle}},
\]
where $[z]$ and $[w]$ are the images of~$z$ and $w$ in $\c^{2,1}$ under the projectivisation map~$\p$. 
The group of holomorphic isometries of~$\chp$ with respect to the Bergman metric
can be identified with the projective unitary group $\PU(2,1)$.

\subsection{Complex geodesics:}
A {\defit complex geodesic\/} is a projectivisation of a 2-dimensional complex subspace of $\c^{2,1}$.
Any complex geodesic is isometric to \[\{[z:0:1]\st z\in\c\}\] in the projective model.
Any positive vector $c\in V_+$ determines a two-dimensional complex subspace
\[\{z\in\c^{2,1}\st \<c,z\>=0\}.\]
Projecting this subspace we obtain a complex geodesic
\[\p\left(\{z\in\c^{2,1}\st \<c,z\>=0\}\right).\]
Conversely, any complex geodesic is represented by a positive vector $c\in V_+$, 
called a {\defit polar vector\/} of the complex geodesic.
A polar vector is unique up to multiplication by a complex scalar.
We say that the polar vector~$c$ is {\defit normalised\/} if $\<c,c\>=1$.
\\
\linebreak
Let $C_1$ and $C_2$ be complex geodesics with normalised polar vectors~$c_1$ and~$c_2$ respectively.
We call $C_1$ and $C_2$ {\defit ultra-parallel\/} if they have no points of intersection in $\chp\cup\bchp$,
in which case
\[|\<c_1,c_2\>|=\cosh\left(\frac{1}{2}\dist(C_1, C_2)\right)>1,\]
where $\dist(C_1, C_2)$ is the distance between $C_1$ and~$C_2$.
We call $C_1$ and $C_2$ {\defit ideal\/} if they have a point of intersection in $\bchp$,
in which case $|\<c_1,c_2\>|=1$ and $\dist(C_1, C_2)=0$.

\subsection{Complex reflections:}
For a given complex geodesic $C$, a {\defit minimal complex hyperbolic reflection of order~$n$} in~$C$
is the isometry $\iota_C$ in $\PU(2,1)$ of order~$n$ with fixed point set~$C$ given by
\[\iota(z) = -z+(1-\mu)\frac{\<z,c\>}{\<c,c\>}c,\]
where $c$ is a polar vector of~$C$ and $\mu=\exp(2\pi i/n)$.

\subsection{Complex hyperbolic triangle groups:}
A {\defit complex hyperbolic triangle\/} is a triple \((C_1, C_2, C_3)\) of complex geodesics in $\chp$.
A triangle \((C_1, C_2, C_3)\) is a {\defit complex hyperbolic ultra-parallel \([m_1, m_2, m_3]\)-triangle\/}
if the complex geodesics are ultra-parallel at distances $m_k=\dist(C_{k-1}, C_{k+1})$ for $k=1,2,3$.
We will allow $m_k=0$ for some or all~$k$.
A {\defit complex hyperbolic ultra-parallel \([m_1,m_2,m_3;n_1,n_2,n_3]\)-triangle group\/}
is a subgroup of \(\PU(2,1)\) generated by complex reflections \(\iota_k\) of order \(n_k\) in the sides \(C_k\)
of a complex hyperbolic ultra-parallel \([m_1, m_2, m_3]\)-triangle \((C_1, C_2, C_3)\).

\subsection{Angular invariant:}
For each fixed triple \(m_1, m_2, m_3\) the space of \([m_1, m_2, m_3]\)-triangles is of real dimension one.
We can describe a parametrisation of the space of complex hyperbolic triangles in $\chp$ by means of an angular invariant $\al$.
We define the {\defit angular invariant\/} $\al$ of the triangle \((C_1, C_2, C_3)\) by
\[\al=\arg\left(\prod_{k=1}^3 \<c_{k-1}, c_{k+1}\>\right),\]
where $c_k$ is the normalised polar vector of the complex geodesic~$C_k$.
We use the following proposition, given in \cite{anna}, which gives criteria for the existence of a triangle group
in terms of the angular invariant.

\begin{proposition}
\label{traingle-existence}
An $[m_1, m_2, m_3]$-triangle in $\chp$ is determined uniquely up to isometry
by the three distances between the complex geodesics and the angular invariant~$\al$.
For any $\al\in[0, 2\pi]$, an $[m_1, m_2, m_3]$-triangle with angular invariant~$\al$ exists if and only if
\[\cos(\al)<\frac{r_1^2+r_2^2+r_3^2-1}{2r_1r_2r_3},\]
where $r_k=\cosh(m_k/2)$.
\end{proposition}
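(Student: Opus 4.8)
The plan is to encode the triangle $(C_1,C_2,C_3)$ by the Gram matrix of normalised polar vectors and to read off both existence and uniqueness from the signature of this matrix. Writing $c_k$ for a normalised polar vector of $C_k$, the ultra-parallel conditions give $|\langle c_{k-1},c_{k+1}\rangle|=r_k$, so I would set $\langle c_2,c_3\rangle=r_1 e^{i\theta_1}$, $\langle c_3,c_1\rangle=r_2 e^{i\theta_2}$, $\langle c_1,c_2\rangle=r_3 e^{i\theta_3}$ and form the Hermitian matrix $G=(\langle c_i,c_j\rangle)$ with unit diagonal. The key observation is that a configuration of three complex geodesics in $\chp$ is the same datum as a triple of positive vectors in $\c^{2,1}$, and the triangle exists precisely when $G$ is the Gram matrix of a basis of a space of signature $(2,1)$: since the diagonal entries equal $1>0$ each realising vector is automatically positive, and $\det G\neq0$ forces linear independence, so realisability of $G$ reduces to $G$ having signature $(2,1)$.

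First I would compute $\det G$ directly. Expanding along the first row and using $\langle c_j,c_i\rangle=\overline{\langle c_i,c_j\rangle}$, the off-diagonal phases enter only through the single combination $\theta_1+\theta_2+\theta_3$, and one obtains
\[\det G = 1-r_1^2-r_2^2-r_3^2+2r_1r_2r_3\cos(\theta_1+\theta_2+\theta_3).\]
A short check shows that the rescalings $c_k\mapsto e^{i\phi_k}c_k$, which leave each geodesic unchanged, preserve $\prod_k\langle c_{k-1},c_{k+1}\rangle$, so $\al=\arg\prod_k\langle c_{k-1},c_{k+1}\rangle=-(\theta_1+\theta_2+\theta_3)$ is well defined and $\cos(\theta_1+\theta_2+\theta_3)=\cos\al$. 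Hence $\det G<0$ is exactly the stated inequality $\cos\al<(r_1^2+r_2^2+r_3^2-1)/(2r_1r_2r_3)$.

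To convert $\det G<0$ into the signature statement, I would apply Jacobi's criterion to the leading principal minors $D_0=1$, $D_1=1$, $D_2=1-r_3^2$ and $D_3=\det G$. Whenever some distance is positive I may reorder the geodesics so that the corresponding pair sits in the top-left block, forcing $D_2<0$; the sequence $D_0,D_1,D_2,D_3$ then exhibits exactly one sign change, that is one negative eigenvalue, precisely when $D_3<0$. This yields signature $(2,1)$ if and only if $\det G<0$ and settles existence. (The totally ideal case $m_1=m_2=m_3=0$ is degenerate and may be handled separately; it does not arise for the groups studied here.) For uniqueness, given two triangles with the same distances and the same $\al$, I would first use the rescaling freedom to arrange identical Gram matrices: the two effective parameters among $\phi_1,\phi_2,\phi_3$ allow me to normalise, say, $\theta_1=\theta_2=0$, after which $\theta_3=-\al$ is pinned down. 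Since $\det G\neq0$ the vectors $c_k$ and $c_k'$ are both bases, so $c_k\mapsto c_k'$ extends to an element of $\U(2,1)$, whose projection in $\PU(2,1)$ carries one triangle to the other.

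The main obstacle I anticipate is the signature bookkeeping rather than the determinant computation: Jacobi's criterion requires nonvanishing leading minors, so the reordering argument and the all-ideal degenerate case need care, and I must confirm that an abstract Gram matrix of signature $(2,1)$ genuinely produces positive vectors spanning $\c^{2,1}$, which is guaranteed here only because the diagonal is $1$ and $\det G\neq0$.
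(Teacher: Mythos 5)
The paper itself offers no proof of this proposition; it is imported verbatim from the reference \cite{anna}, so there is nothing in-paper to compare you against. Your Gram-matrix argument is the standard route and is correct in its essentials: the determinant expansion does give $\det G=1-r_1^2-r_2^2-r_3^2+2r_1r_2r_3\cos(\theta_1+\theta_2+\theta_3)$; the product $\prod_k\langle c_{k-1},c_{k+1}\rangle$ is invariant under $c_k\mapsto e^{i\phi_k}c_k$ because the phase shifts $\phi_{k-1}-\phi_{k+1}$ sum to zero, so $\cos\al=\cos(\theta_1+\theta_2+\theta_3)$; and the normalisation $\theta_1=\theta_2=0$ followed by a Witt-type extension of $c_k\mapsto c_k'$ to $\U(2,1)$ gives uniqueness. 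One simplification: you do not need Jacobi's criterion, the reordering trick, or the all-ideal caveat. If $\det G<0$ then $G$ is nondegenerate with an odd number of negative eigenvalues, and three negative eigenvalues are impossible since $G_{11}=1>0$; hence signature $(2,1)$ in one line, uniformly in all cases.

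The one genuine gap is in the ``only if'' direction. You declare that a triangle is ``the same datum as'' the Gram matrix of a \emph{basis}, but the paper defines a triangle as an arbitrary triple of complex geodesics, and three pairwise ultra-parallel geodesics can have linearly dependent polar vectors: take $c_k=(\cosh t_k,0,\sinh t_k)$ for three distinct values $t_1,t_2,t_3$, all lying in the signature-$(1,1)$ plane $W=\mathrm{span}(e_1,e_3)$. Then $|\langle c_i,c_j\rangle|=\cosh(t_i-t_j)>1$, so the three geodesics are pairwise ultra-parallel (they share the common orthogonal complex geodesic $\p(W)$), yet $\det G=0$ and the displayed inequality holds with equality rather than strictly. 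So as literally stated the ``only if'' half requires either a nondegeneracy convention built into the word ``triangle'' or a separate treatment of the boundary case $\det G=0$, which your argument silently excludes by assuming independence. This does not affect the present paper, which invokes the proposition only for $m_3=0$ and $\al\in(0,2\pi)$, where the inequality is automatically strict, but it is the step you would need to shore up to prove the statement as written.
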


\noindent
For $m_3=0$ we have $r_3=1$ and the right hand side of the inequality in Proposition~\ref{traingle-existence} is 
\[\frac{r_1^2+r_2^2}{2r_1r_2}\ge1,\]
so the condition on~$\al$ is always satisfied,
i.e.\ for any $\al\in[0, 2\pi]$ there exists an $[m_1, m_2, m_3]$-triangle with angular invariant~$\al$.

\subsection{Heisenberg group:}
\label{heisenberg-group}
The boundary of complex hyperbolic space can be identified with the one-point compactification of the {\defit Heisenberg group\/} 
\[\calN=\c\times\r\cup\{\infty\}=\{(\zeta,\nu)\st\zeta\in\c,\nu\in\r\}\cup\{\infty\}.\]
One homeomorphism taking $\bchp$ to $\calN$ is given by the stereographic projection:
\[
  [z_1:z_2:z_3]\mapsto\left(\frac{z_1}{z_2+z_3}, \Im\left(\frac{z_2-z_3}{z_2+z_3}\right)\right)~\text{if}~z_2+z_3\ne0,
  \quad
  [0:z:-z]\mapsto\infty.
\]
The {\defit Heisenberg group\/} is the Heisenberg space~$\calN$ with the group law
\[(\xi_1,\nu_1)*(\xi_2,\nu_2)=(\xi_1+\xi_2,\nu_1+\nu_2+2\Im(\xi_1\bar{\xi_2})).\]
The centre of~$\calN$ consists of elements of the form~$(0,\nu)$ for~$\nu\in\r$.
The Heisenberg group is not abelian but is $2$-step nilpotent.
To see this, observe that 
\[[(\xi_1,\nu_1),(\xi_2,\nu_2)]=(\xi_1,\nu_1)^{-1}*(\xi_2,\nu_2)^{-1}*(\xi_1,\nu_1)*(\xi_2,\nu_2)=(0,4\Im(\xi_1\bar{\xi_2})).\]
Therefore the commutator of any two elements of~$\calN$ lies in the centre.
\\
\linebreak
An alternative description of the Heisenberg group~$\calN$ is as the group of upper triangular matrices
\[\left\{\begin{pmatrix} 1&x&y\\ 0&1&z\\ 0&0&1\end{pmatrix}\st x,y,z\in\r\right\}\]
with the operation of matrix multiplication.
For any integer~$k\ne0$, the subgroup $N_k$ generated by the matrices
\[a=\begin{pmatrix} 1&0&0\\ 0&1&1\\ 0&0&1\end{pmatrix},\quad b=\begin{pmatrix} 1&1&0\\ 0&1&0\\ 0&0&1\end{pmatrix}\quad\text{and}\quad c=\begin{pmatrix} 1&0&\frac{1}{k}\\ 0&1&0\\ 0&0&1\end{pmatrix}\]
is a uniform lattice in~$\calN$ with the presentation
\[N_k=\<a,b,c\st [b,a]=c^k,~[c,a]=[c,b]=1\>.\] 
Moreover, any uniform lattice in~$\calN$ is isomorphic to~$N_k$ for some integer~$k\ne0$, see section~6.1 in~\cite{dekimpe}.

\subsection{Chains:}
A complex geodesic in~$\chp$ is homeomorphic to a disc,
its intersection with the boundary of the complex hyperbolic plane is homeomorphic to a circle.
Circles that arise as the boundaries of complex geodesics are called {\defit chains\/}.
\\
\linebreak
There is a bijection between chains and complex geodesics. We can therefore, without loss of generality, talk about reflections in chains instead of reflections in complex geodesics. 
\\
\linebreak
Chains can be represented in the Heisenberg space, for more details see \cite{goldman}.
Chains passing through~$\infty$ are represented by vertical straight lines defined by $\zeta = \zeta_0$.
Such chains are called {\defit vertical\/}.
The vertical chain $C_{\zeta_0}$ defined by $\zeta=\zeta_0$ has a polar vector
\[c_{\zeta_0}=\begin{bmatrix}1\\ -\bar{\zeta_0}\\ \bar{\zeta_0}\end{bmatrix}.\]
A chain not containing~$\infty$ is called {\defit finite\/}.
A finite chain is represented by an ellipse whose vertical projection $\c\times\r\rightarrow\c$ is a circle in~$\c$.
The finite chain with centre $(\zeta_0,\nu_0)\in\calN$ and radius $r_0 > 0$ has a polar vector
\[\begin{bmatrix}2\zeta_0 \\ 1+r_0^2-\zeta_0\bar{\zeta_0}+i\nu_0 \\ 1-r_0^2+\zeta_0\bar{\zeta_0}-i\nu_0  \end{bmatrix}\]
and consists of all points~$(\zeta,\nu)\in\calN$ satisfying the equations
\[|\zeta-\zeta_0|=r_0,\quad\nu=\nu_0-2\Im(\zeta\bar{\zeta}_0).\]

\subsection{Heisenberg isometries:}
\label{heisenberg-isometries}
We consider the space~$\calN$ equipped with the {\defit Cygan metric\/},
\[
  \rho_0\left((\zeta_1, \nu_1), (\zeta_2, \nu_2)\right)
  = \Big|\abs{\zeta_1-\zeta_2}^2-i(\nu_1-\nu_2)-2i\Im(\zeta_1\bar{\zeta_2})\Big|^{1/2}.
\]

\bigskip\noindent
A {\defit Heisenberg translation\/}~$T_{(\xi,\nu)}$ by $(\xi,\nu)\in\calN$ is given by
\[(\zeta,\omega)\mapsto(\zeta+\xi,\omega+\nu+2\Im(\xi\bar{\zeta}))=(\xi,\nu)*(\zeta,\omega)\]
and corresponds to the following element in $\PU(2,1)$
\[
  \begin{pmatrix}
    1 & \xi & \xi \\
    -\bar{\xi} & 1-\frac{|\xi|^2-i\nu}{2} & -\frac{|\xi|^2-i\nu}{2}\\ 
    \bar{\xi} & \frac{|\xi|^2-i\nu}{2} & 1+\frac{|\xi|^2-i\nu}{2} 
  \end{pmatrix}.
\]
A special case is a vertical Heisenberg translation~$T_{(0,\nu)}$ by $(0,\nu)\in\calN$ given by
\[(\zeta,\omega)\mapsto(\zeta,\omega+\nu).\]
A {\defit Heisenberg rotation\/}~$R_{\mu}$ by $\mu\in\c$, $|\mu|=1$, is given by
\[(\zeta,\omega)\mapsto(\mu\cdot\zeta,\omega)\]
and corresponds to the following element in $\PU(2,1)$
\[
  \begin{pmatrix}
    \mu & 0 & 0 \\
    0 & 1 & 0\\ 
    0 & 0 & 1
  \end{pmatrix}.
\]
A minimal complex reflection~$\iota_{C_{\varphi}}$ of order~$n$ in a vertical chain~$C_{\varphi}$ with polar vector 
\[c_{\varphi}=\begin{bmatrix}1\\ -\bar{\varphi}\\ \bar{\varphi}\end{bmatrix}\]
 is given by
\[
  (\zeta,\omega)
  \mapsto
  \left(\mu\zeta+(1-\mu)\varphi,\omega-2|\varphi|^2\Im(1-\mu)+2\Im((1-\mu)\bar{\varphi}\zeta)\right)
\]
and corresponds to the following element in~$\PU(2,1)$
\[
  \begin{pmatrix}
    -\mu & -(1-\mu)\varphi & -(1-\mu)\varphi\\ 
  -(1-\mu)\bar{\varphi} & (1-\mu)|\varphi|^2-1 & (1-\mu)|\varphi|^2\\ 
  (1-\mu)\bar{\varphi} & -(1-\mu)|\varphi|^2 & -(1-\mu)|\varphi|^2-1
  \end{pmatrix},
\] 
where $\mu=\exp(2\pi i/n)$.
The complex reflection $\iota_{C_{\varphi}}$ can be decomposed as a product of a Heisenberg translation and a Heisenberg rotation:
\[\iota_{C_{\varphi}}=R_{\mu}\circ T_{(\xi,\nu)}=T_{(\mu\xi,\nu)}\circ R_{\mu},\]
where 
\[
  \xi=(\bar{\mu}-1)\varphi
  \quad\text{and}\quad
  \nu=-2|\varphi|^2\cdot\Im(1-\mu)=2|\varphi|^2\sin(2\pi/n).
\]
Heisenberg translations, Heisenberg rotations and complex reflections are isometries with respect to the Cygan metric.
The group of all Heisenberg translations is isomorphic to~$\calN$.
The group of all Heisenberg rotations $\{R_{\mu}\st\mu\in\c,~|\mu|=1\}$ is isomorphic to~$\U(1)$.
The group of their products $\calN\semiprod\U(1)$ contains all complex reflections.

\subsection{Products of reflections in chains:}
What effect does the minimal complex reflection of order~$n$ in the vertical chain~$C_\zeta$ have on another vertical chain, $C_\xi$, which intersects $\c\times\{0\}$ at~$\xi$?
\\
\linebreak
We calculate 
\begin{align*}
  \begin{pmatrix}
    -\mu & -(1-\mu)\zeta & -(1-\mu)\zeta \\
    -(1-\mu)\bar{\zeta} & (1-\mu)|\zeta|^2-1 & (1-\mu)|\zeta|^2 \\
    (1-\mu)\bar{\zeta} & -(1-\mu)|\zeta|^2 & -(1-\mu)|\zeta|^2-1
  \end{pmatrix}
  \begin{bmatrix}1\\ -\bar{\xi}\\ \bar{\xi}\end{bmatrix}
  =
  \begin{bmatrix} -\mu\\ -(1-\mu)\bar{\zeta}+\bar{\xi}\\ (1-\mu)\bar{\zeta} -\bar{\xi}\end{bmatrix}.
\end{align*}
This vector is a multiple of
\[
  \begin{bmatrix} 
    1\\ 
    (1-\mu)\bar{\mu}\bar{\zeta}-\bar{\mu}\bar{\xi}\\
    -(1-\mu)\bar{\mu}\bar{\zeta} +\bar{\mu}\bar{\xi}
\end{bmatrix}
=\begin{bmatrix} 1 \\ -\overline{\left(\mu\xi-(\mu-1)\zeta\right)} \\ \overline{\left(\mu\xi-(\mu-1)\zeta\right)} \end{bmatrix}
\]
which is the polar vector of the vertical chain that intersects $\c\times\{0\}$ at $\mu\xi-(\mu-1)\zeta$. 
This corresponds to rotating $\xi$ around $\zeta$ through $\frac{2\pi}{n}$.
So if we have a vertical chain $C_{\xi}$, the minimal complex reflection of order~$n$ in another vertical chain~$C_{\zeta}$ rotates $C_{\xi}$ as a set around $C_{\zeta}$ through $\frac{2\pi}{n}$ (but not point-wise).

\subsection{Bisectors and spinal spheres:}
Unlike in the real hyperbolic space, there are no totally geodesic real hypersurfaces in $\chp$.
An acceptable substitute are the metric bisectors.
Let $z_1, z_2\in\chp$ be two distinct points.
The {\defit bisector equidistant\/} from~$z_1$ and~$z_2$ is defined as
\[\{z\in\chp\st \rho(z_1,z)=\rho(z_2,z)\}.\]
The intersection of a bisector with the boundary of~$\chp$ is a smooth hypersurface in~$\bchp$ called a {\defit spinal sphere\/}, which is diffeomorphic to a sphere. 
An example is the bisector
\[\calC=\{[z:it:1]\in\chp\st |z|^2<1-t^2,~z\in\c,~t\in\r\}.\]
Its boundary, the {\defit unit spinal sphere\/}, can be described as
\[U=\{(\zeta,\nu)\in\calN\st |\zeta|^4+\nu^2=1\}.\]

\section{Parametrisation of complex hyperbolic triangle groups of type $[m_1, m_2, 0;n_1,n_2,n_3]$}
\label{sec-param}
\noindent
For $r_1, r_2 \ge1$ and $\al\in(0,2\pi)$, let $C_1$, $C_2$ and $C_3$ be the complex geodesics with respective polar vectors
\[
  c_1 = \begin{bmatrix}1 \\ -r_2e^{-i\theta} \\ r_2e^{-i\theta} \end{bmatrix},\quad
  c_2 = \begin{bmatrix}1 \\ r_1e^{i\theta} \\ -r_1e^{i\theta} \end{bmatrix}
  \quad\mbox{and}\quad
  c_3 = \begin{bmatrix}0 \\ 1 \\ 0 \end{bmatrix},
\]
where $\theta=(\pi-\al)/2\in(-\pi/2,\pi/2)$.
The type of triangle formed by $C_1,C_2,C_3$ is an ultra-parallel $[m_1, m_2,0]$-triangle with angular invariant $\al$,
where $r_k=\cosh(m_k/2)$ for~$k=1,2$.
\\
\linebreak
Let $\iota_k$ be the minimal complex reflection of order $n_k$ in the chain $C_k$ for $k=1,2,3$.
The group $\Ga=\<\iota_1,\iota_2,\iota_3\>$ generated by these three complex reflections
is an ultra-parallel complex hyperbolic triangle group of type $[m_1, m_2, 0;n_1,n_2,n_3]$.
Looking at the arrangement of the chains $C_1$, $C_2$ and $C_3$ in the Heisenberg space $\calN$,
the finite chain~$C_3$ is the (Euclidean) unit circle in $\c\times\{0\}$,
whereas $C_1$ and~$C_2$ are vertical lines
through the points $\varphi_1 = r_2e^{i\theta}$ and $\varphi_2=-r_1e^{-i\theta}$ respectively, see Figure~\ref{fig-chains}.
For~$k=1,2$, the reflection~$\iota_k$ rotates any vertical chain as a set  through $\frac{2\pi}{n_k}$ around~$C_k$.

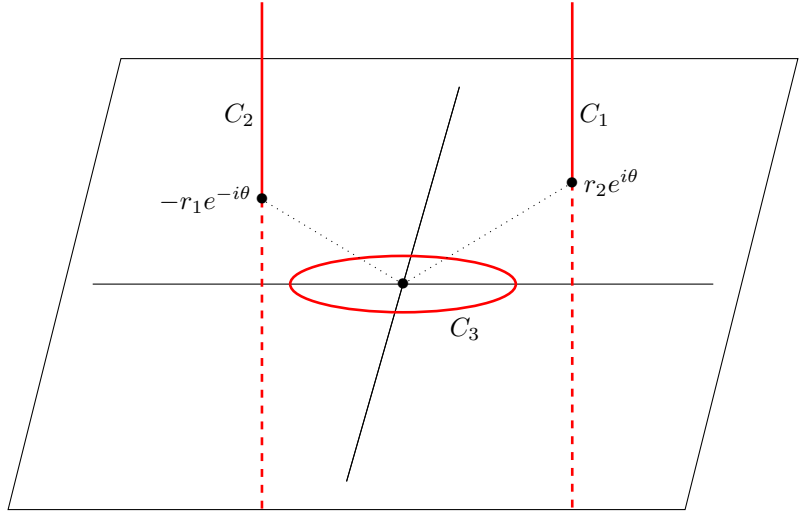
\begin{figure}[h]
\begin{center}
\begin{tikzpicture}[scale=0.75]
  \path[clip] (-8,-5)--(-8,5)--(8,5)--(8,-5)--(-8,-5)--cycle;
  \draw (-7,-4)--(5,-4)--(7,4)--(-5,4)--(-7,-4)--cycle;
  \draw (-5.5,0)--(5.5,0);
  \draw (-1,-3.5)--(1,3.5);
  \draw (-1,-3.5)--(1,3.5);
  \draw[line width=.35mm, red] (-2.5,1.5)--(-2.5,5);
  \draw[line width=.35mm, red] (3,1.8)--(3,5);
  \draw[line width=.35mm, red,dashed] (-2.5,1.5)--(-2.5,-4);
  \draw[line width=.35mm, red,dashed] (3,1.8)--(3,-4);
  \draw[line width=.35mm, red] (0,0) ellipse (2cm and 0.5cm);
  \path[draw,dotted] (0,0)--(-2.5,1.5);
  \path[draw,dotted] (0,0)--(3,1.8);
  \foreach \Point in {(0,0), (-2.5,1.5), (3,1.8)}{\node at \Point {$\bullet$};}
  \node at (1.1,-0.8) {$C_3$};
  \node at (-2.9,3) {$C_2$};
  \node at (3.4,3) {$C_1$};
  \node at (3.7,1.8) {$r_2 e^{i\theta}$};
  \node at (-3.5,1.5) {$-r_1 e^{-i\theta}$};
\end{tikzpicture}
\end{center}
\caption{Chains $C_1$, $C_2$ and $C_3$ (figure from~\cite{andyanna}).}
\label{fig-chains}
\end{figure}

\noindent
In this case, the descriptions of complex reflections in section~\ref{heisenberg-isometries}  are as follows:
The reflection $\iota_k$ for $k=1,2$ is given by 
\[(\zeta,\omega) \mapsto \left(\mu_k\zeta+(1-\mu_k)\varphi_k,\omega+2|\varphi_k|^2\Im(1-\mu_k)+2\Im((1-\mu_k)\bar{\varphi_k}\zeta)\right),\]
where $\mu_k=\exp(2\pi i/n_k)$, and can be decomposed into a product of a Heisenberg translation and a Heisenberg rotation:
\[\iota_k=R_{\mu_k}\circ T_{(\xi_k,\nu_k)}=T_{(\mu_k\xi_k,\nu_k)}\circ R_{\mu_k},\]
where 
\[
  \xi_k=(\bar{\mu_k}-1)\varphi_k
  \quad\text{and}\quad
  \nu_k=-2|\varphi_k|^2\cdot\Im(1-\mu_k)=2|\varphi_k|^2\sin(2\pi/n_k).
\]
For $k=1,2$, the reflection~$\iota_k$ rotates any vertical chain as a set through $\frac{2\pi}{n_k}$ around~$C_k$.


\section{Orders of Reflection}
\label{ordreflc}
\noindent
The group \(\Gamma=\langle{\iota_1, \iota_2, \iota_3\rangle}\) generated by the three complex reflections is an ultra-parallel complex hyperbolic triangle group of type \([m_1, m_2, 0; n_1, n_2, n_3]\). We want to know for what orders of the complex reflections \(\iota_1\) and \(\iota_2\) is the group \(\Gamma\) discrete. We will use the work of Hersonsky and Paulin \cite{hersonpaul}.
\\
\linebreak
Recall the Heisenberg group $\calN$ endowed with the group law
\[(\zeta_1, \nu_1) \ast (\zeta_2, \nu_2) = \left(\zeta_1+\zeta_2, \nu_1+\nu_2+2\Im\left(\zeta_1\bar{\zeta_2}\right)\right)\]
introduced in section \ref{heisenberg-group}. First, we will use Proposition \(5.4\) (specifically when \(n=2\)) of \cite{hersonpaul}:
\begin{proposition}
\label{herpaul}
Let \(\Gamma\) be a discrete cocompact subgroup in $\calN$.
Let $\pi:\calN\to\mathbb{C}$ be the canonical projection defined by $\pi(\zeta,\nu)=\zeta$.
Then $\pi(\Gamma)$ is a cocompact lattice in $\mathbb{C}$.
\end{proposition}

\begin{proof}
From the Heisenberg group $\calN$ there exists a central extension
\[0\rightarrow\mathbb{R}\rightarrow\calN\rightarrow\mathbb{C} \rightarrow 0.\]
By this, we have that 
\[
  \ker(\pi)=\mathbb{R}
  \quad\mbox{and}\quad
  \calN/\mathbb{R}=\mathbb{C}.
\]
Note that $\Gamma\cap\mathbb{R}$ is a normal subgroup of $\Gamma$, since $\mathbb{R}$ is in the centre of $\calN$.
Therefore, the group 
\[G=\Gamma/\left(\Gamma \cap\mathbb{R}\right)\]
which identifies to $\pi(\Gamma)$ acts on $\mathbb{C}$.
We can see that $G$ acts with bounded quotient on $\calN/\mathbb{R}$ and therefore $\pi(\Gamma)$ acts cocompactly on $\mathbb{C}$. 
\\
\linebreak
We next need to show that $G$ acts discretely on $\mathbb{C}$.
For a contradiction, suppose not.
For a sequence to converge on the plane $\mathbb{C}$,
we are able to bound their corresponding elements in $\Gamma$ by applying a vertical Heisenberg translation $H$ in $\Gamma$,
which exists due to $\Gamma$ being non-abelian.
This implies that there is a convergent subsequence to an element \((0,t), t\in\mathbb{R}\).
This is the required contradiction since $\Gamma$ is discrete, and hence discrete on $\mathbb{C}$, which completes the proof. 
\end{proof}

\noindent
We now use two more results from Hersonsky and Paulin \cite{hersonpaul} (Theorem \(5.3\) and Proposition \(5.8\)):
\begin{theorem}
\label{indexconstant}
Let \(G\) be a cocompact, discrete, torsion-free subgroup of isometries of \(\calN_{2n-1}\).
There exists a universal constant \(I_n\) such that \(G\) contains a cocompact lattice of index less than or equal to \(I_n\).
Moreover,
\[I_n \leq 2\left(6\pi\right)^{\frac{n}{2}(n-1)}.\]
\end{theorem}

\begin{proposition}
\label{index}
We have \(I_2=6\).
\end{proposition}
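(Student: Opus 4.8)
\section*{Proof proposal}

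The plan is to identify $I_2$ with the largest possible order of the point group (holonomy) of a torsion-free cocompact discrete subgroup $G$ of the relevant isometry group $\mathpzc{N}\semiprod\U(1)$, and then to evaluate this order by means of the crystallographic restriction. Let $G$ be such a group and set $\Lambda_0 = G\cap\mathpzc{N}$, the subgroup of pure Heisenberg translations in $G$. Every cocompact lattice contained in $G$ lies in $\mathpzc{N}$, hence is contained in $\Lambda_0$; thus $\Lambda_0$ is the maximal such lattice and the minimal index of a cocompact lattice in $G$ equals $[G:\Lambda_0]$. The projection $\mathpzc{N}\semiprod\U(1)\to\U(1)$ restricted to $G$ has kernel $\Lambda_0$, so $[G:\Lambda_0]=|F|$, where $F\subset\U(1)$ is the point group, which is finite and therefore cyclic. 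Consequently $I_2=\max_G|F|$, the maximum being taken over all torsion-free cocompact discrete $G$.

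First I would prove the upper bound $I_2\le 6$. By Proposition~\ref{herpaul} the image $\pi(\Lambda_0)\subset\c$ is a cocompact lattice, and $F$ acts on $\c$ by the Heisenberg rotations $R_\mu$, which preserve $\pi(\Lambda_0)$ as a set. A finite cyclic group of rotations of the plane preserving a rank-two lattice has order in $\{1,2,3,4,6\}$ by the classical crystallographic restriction; hence $|F|\le 6$ for every such $G$, giving $I_2\le 6$ (comfortably inside the general bound $12\pi$ supplied by Theorem~\ref{indexconstant}).

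Then I would establish sharpness by exhibiting a torsion-free $G$ with $|F|=6$. Take $\mu=\exp(\pi i/3)$ and let $\pi(\Lambda_0)=\z[\mu]$ be the hexagonal (Eisenstein) lattice, which is invariant under multiplication by $\mu$; complete it to a uniform lattice $\Lambda_0\cong N_k$ in $\mathpzc{N}$. Let $g=R_\mu\circ T_{(0,\nu)}$ be the screw motion along the centre, with $\nu$ chosen so that $g^6=T_{(0,6\nu)}$ is a nontrivial central element of $\Lambda_0$. Since $R_\mu$ permutes $\pi(\Lambda_0)$ and fixes the vertical direction, $g$ normalises $\Lambda_0$, so $G=\langle\Lambda_0,g\rangle$ is a cocompact discrete subgroup with point group $F=\langle\mu\rangle$ of order $6$. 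As $g^6\ne\Id$, the generator $g$ has infinite order and $[G:\Lambda_0]=6$, which forces $I_2\ge 6$; combined with the upper bound this yields $I_2=6$.

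The main obstacle is verifying that this extremal group is genuinely torsion-free, since that is precisely the hypothesis under which $I_2$ is defined. Every element of $G$ has the form $g^k\lambda$ with $\lambda\in\Lambda_0$; those with $6\mid k$ lie in the torsion-free lattice $\Lambda_0$, while for $6\nmid k$ the rotational part $\mu^k\ne 1$ means that the smallest power of $g^k\lambda$ landing in $\mathpzc{N}$ is a pure translation whose horizontal component vanishes (a sum of nontrivial roots of unity) but whose vertical component $\Sigma$ must be shown to be nonzero. This is exactly where the Heisenberg centre is essential: unlike an orientation-preserving Euclidean plane isometry, which always has a fixed point and hence finite order whenever its rotation part is finite, a screw motion with nonzero vertical displacement has no fixed point. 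The crux of the argument is therefore the arithmetic check that, for the chosen $\nu$, the accumulated vertical translation $\Sigma$ of each such power is nonzero, so that no nonidentity element of $G$ is torsion.
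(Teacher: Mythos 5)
First, a remark on the comparison you were asked to make: the paper contains no proof of this proposition at all --- $I_2=6$ is imported verbatim from Hersonsky and Paulin (their Proposition~5.8), so your argument has to stand on its own. Your reduction of $I_2$ to the maximal order of the holonomy $F=G/(G\cap\calN)$ is the right framework (you should cite the generalised Bieberbach/Auslander theorem to justify that $G\cap\calN$ is itself a cocompact lattice and that $F$ is finite), and the crystallographic restriction does bound a finite \emph{cyclic} rotation group preserving $\pi(G\cap\calN)$ by order $6$. One caveat even on the upper bound: the Cygan isometry group of $\calN$ also contains the orientation-reversing map $(\zeta,\nu)\mapsto(\bar\zeta,-\nu)$ --- this is why $\mathpzc{P}^2(2,2)$ appears in the discussion immediately after the proposition --- so if $I_2$ is taken over the full isometry group then $F$ could a priori be dihedral of order up to $12$ while still preserving a planar lattice, and excluding torsion-free groups with holonomy of order $8$ or $12$ is not supplied by the crystallographic restriction alone.

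The serious gap is the one you flag yourself, and it is worse than an unverified step: the construction you propose actually fails. Take $\mu=e^{i\pi/3}$ and the standard lift $\Lambda_0=\langle a,b\rangle$ with $a=T_{(1,0)}$, $b=T_{(\mu,0)}$, so the centre of $\Lambda_0$ is generated by $c=[b,a]=T_{(0,\tau)}$ with $\tau=2\sqrt3$, and the elements of $\Lambda_0$ with horizontal part $1+\mu$ are exactly $ab\,c^j=T_{(1+\mu,\,-\tau/2+j\tau)}$, $j\in\z$. For $g=R_\mu\circ T_{(0,\nu)}$ one computes $\bigl(g^3\,T_{(\xi_0,\nu_0)}\bigr)^2=T_{(0,\,6\nu+2\nu_0)}$, the horizontal contributions cancelling. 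Your requirement that $g^6=T_{(0,6\nu)}$ be a nontrivial central element of $\Lambda_0$ forces $6\nu=m\tau$ with $m\in\z\setminus\{0\}$. If $m$ is even, then $-3\nu\in\tau\z$ and $g^3c^{-m/2}=R_{-1}$ is an involution in $G$; if $m$ is odd, then $-3\nu\in\tau/2+\tau\z$ is realised as the vertical coordinate of some $\lambda=ab\,c^{(1-m)/2}\in\Lambda_0$, and $g^3\lambda=R_{-1}T_{(1+\mu,0)}$ is a nontrivial involution. So \emph{every} choice of $\nu$ produces $2$-torsion, and the obstruction persists for other $R_\mu$-invariant lifts of the hexagonal lattice as long as $g$ is a purely vertical screw. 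To realise holonomy $\z/6$ without torsion one must take a generator with nontrivial horizontal part (equivalently, choose the parameters $k_2,k_3$ in case~16 of section~7.1 of \cite{dekimpe} so that the torsion-freeness congruences hold); verifying those congruences is precisely the content of the lower bound $I_2\ge6$, and the cleanest repair is to quote the torsion-free almost-Bieberbach group with holonomy $\z/6$ from that classification, which the paper already uses elsewhere.
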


\noindent
For \(n=2\), Theorem \ref{indexconstant} and Proposition \ref{index} tells us that there exists a constant, \(I_2\), such that the group of Heisenberg translations, which is a cocompact, discrete, torsion free subgroup of the group of Heisenberg isometries, contains a lattice of index less than or equal to \(I_2=6\). From Proposition \ref{herpaul}, we know that the canonical projection of this isometry group is a cocompact lattice in \(\mathbb{C}\). 
\\
\linebreak
That is, by the classification of Euclidean
$2$-orbifolds, the group of Heisenberg translations contains a lattice subgroup of index $m=1,2,3,4,6$. Moreover, if $m=2$ then the canonical projection of the translation subgroup is $\calS^2(2,2,2,2)$ or $\calP^2(2,2)$.
If $m=3,4,6$ then the canonical projection of the translation subgroup is a $(3,3,3)-$, $(2,4,4)-$, or $(2,3,6)-$triangle group respectively.
As the Heisenberg translations are generated by the complex reflections, the orders of the complex reflections have to be contained in one of these groups.
This gives rise to Theorem \ref{reforders}.

\begin{remark}
The cases when the unordered pair of orders of the complex reflections \(\iota_1\) and \(\iota_2\) is \(\lbrace{2,2\rbrace}\) and \(\lbrace{3,3\rbrace}\) were discussed in \cite{andyanna} and \cite{povprat} respectively. 
\end{remark}

\section{Compression Property}
\label{sec-compression}
\noindent
Let $C_1,C_2,C_3$ be chains in~$\calN$ as in the previous section.
Let $\iota_k$ be the minimal complex reflection of order~$n_k$ in the chain~$C_k$ for $k=1,2,3$.
We will assume that $n_3=2$.
To prove the discreteness of the group $\<\iota_1,\iota_2,\iota_3\>$ we will use the following version of Klein's combination theorem discussed in \cite{WyssGall}:

\begin{proposition}

\label{criterion}

If there exist subsets $U_1$, $U_2$ and~$V$ in~$\calN$ with $U_1\cap U_2=\varnothing$ and $V\subsetneq U_1$ such that 
$\iota_3(U_1) = U_2$ and  $g(U_2)\subsetneq V$ for all~$g\ne\Id$ in $\<\iota_1,\iota_2\>$,
then the group $\<\iota_1,\iota_2,\iota_3\>$ is a discrete subgroup of $\PU(2,1)$.
Groups with such properties are called {\defit compressing\/}.
\end{proposition}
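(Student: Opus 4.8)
The plan is to prove the statement by a ping-pong argument, exploiting that $\iota_3$ is an involution. Write $H = \langle \iota_1,\iota_2\rangle$ and $G = \langle\iota_1,\iota_2,\iota_3\rangle$. Since $n_3 = 2$ we have $\iota_3^2 = \Id$, so from $\iota_3(U_1) = U_2$ we also get $\iota_3(U_2) = U_1$. Consequently every element of $G$ admits a reduced expression as an alternating word in which syllables from $H\setminus\{\Id\}$ and the single letter $\iota_3$ strictly alternate: no two $\iota_3$'s are adjacent (they would cancel) and no two $H$-syllables are adjacent (they would merge).

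First I would record the two elementary moves furnished by the hypotheses: for any $g\in H\setminus\{\Id\}$ we have $g(U_2)\subseteq V\subsetneq U_1$, while $\iota_3$ interchanges $U_1$ and $U_2$. The essential point is that these chain together, as in $U_2 \xrightarrow{\,g\,} V\subseteq U_1 \xrightarrow{\,\iota_3\,} U_2 \xrightarrow{\,g'\,} V \subseteq U_1 \to \cdots$. Reading a reduced word from the inside out and recording which of $U_1$, $U_2$ the running image lies in, I would prove by induction on the number of syllables that a nontrivial reduced word $w$ maps the appropriate starting set --- namely $U_2$ if the first-applied syllable lies in $H$, and $U_1$ if it is $\iota_3$ --- into $U_1\cup U_2$, with the image landing inside $V$ (respectively $\iota_3(V)$) whenever the last-applied syllable lies in $H$ (respectively is $\iota_3$). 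In particular the image is a proper nonempty subset of $U_1\cup U_2$, disjoint from the complementary piece in the relevant cases, so $w\neq\Id$. This is the ping-pong conclusion and identifies $G$ with the free product $H*\langle\iota_3\rangle$.

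To upgrade ``no reduced word is trivial'' to genuine discreteness in $\PU(2,1)$, I would argue by contradiction, assuming a sequence of distinct $w_j\in G$ with $w_j\to\Id$; as Cygan isometries these converge uniformly on compacta, so $w_j$ moves a fixed compact subset of $U_2$ arbitrarily close to itself. For the pure subgroup $H$ this already yields a contradiction, since any nontrivial $g\in H$ has $g(U_2)\subseteq V\subsetneq U_1$, which is disjoint from $U_2$; hence $H$ is discrete. The remaining task is to control the alternating words by the same mechanism, together with the discreteness of the factor $H$ and the strictness of the inclusion $V\subsetneq U_1$, so as to rule out accumulation at the identity for each syllable-type.

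I expect this last step to be the main obstacle. The hypotheses constrain elements of $H$ only on $U_2$ (never on $U_1$), so words whose first-applied syllable is $\iota_3$ immediately leave the controlled region; moreover the mixed words of the form $\iota_3 g'$, with $g'$ of type $(H,\dots,H)$, send $U_2$ back into $U_2$ rather than into the disjoint set $U_1$, so the naive limiting contradiction does not apply to them directly. I would treat these by replacing $w_j$ with $\iota_3 w_j$ (again reduced, and of a type that does map $U_2$ into the disjoint set $U_1$) and by invoking the already-established discreteness of $H$ to pin down the individual $H$-syllables, so that a sequence tending to the identity forces the syllable data to stabilize and the word length to remain bounded, producing the contradiction. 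The bookkeeping over the syllable-types, and the careful use of the involution relation $\iota_3(U_2) = U_1$ to re-route each image back into $U_2$ before an $H$-syllable acts, is where the real care is needed; the rest is formal.
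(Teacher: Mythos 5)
The paper gives no proof of this proposition: it is quoted as a version of Klein's combination theorem from Wyss-Gallifent's thesis \cite{WyssGall}, so there is nothing in the text to compare your argument against line by line. Judged on its own terms, your ping-pong analysis of reduced alternating words is correct and gives the normal form (equivalently $\langle\iota_1,\iota_2,\iota_3\rangle\cong\langle\iota_1,\iota_2\rangle*\langle\iota_3\rangle$): a word whose first-applied and last-applied syllables both lie in $H=\langle\iota_1,\iota_2\rangle$ sends $U_2$ into $V\subseteq U_1$, hence off $U_2$; the other three syllable types are handled exactly as you say by starting from $U_1$ or by using $\iota_3(V)\subsetneq U_2$. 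Your observation that $H$ itself is discrete (a nontrivial $g\in H$ moves a compact subset of $U_2$ into $U_1$, which is at positive distance from it since $U_2$ is open) is also fine.

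The genuine gap is the step you yourself flag: upgrading freeness to discreteness for the mixed syllable types, and the repair you sketch does not work as stated. If $w_j\to\Id$ with $w_j=\iota_3 h_j$ and $h_j$ of type $(H,\dots,H)$, then replacing $w_j$ by $\iota_3 w_j=h_j$ gives a sequence of type-$(H,\dots,H)$ words converging to $\iota_3$, not to $\Id$; since such words map $U_2$ into $U_1$ and $\iota_3$ does too, no contradiction arises from a single such sequence. Likewise, ``discreteness of $H$ pins down the individual $H$-syllables'' is not justified: convergence of a product of syllables does not give convergence of the syllables. The clean way to close the gap is to observe that the set $G_1$ consisting of $\Id$ together with all elements whose reduced word begins and ends (in the sense of first- and last-applied syllables) with an $H$-syllable is a \emph{subgroup}: it is closed under inversion, and the product of two such words reduces again to such a word or to $\Id$. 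Every nontrivial element of $G_1$ maps $U_2$ into $U_1$, so the compact-set argument you already used for $H$ shows $G_1$ is discrete; for your type-2 sequence one then applies it to $h_jh_{j'}^{-1}\to\Id$, forcing $h_j$ to be eventually constant and hence $w_j=\Id$, a contradiction. Finally $G=G_1\cup\iota_3G_1\cup G_1\iota_3\cup\iota_3G_1\iota_3$ is a finite union of translates of a closed discrete set, hence discrete. Without some argument of this kind (or an equivalent one), your proof establishes only the free-product structure, not the discreteness asserted in the proposition.
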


\noindent
Projecting the actions of complex reflections~$\iota_1$ and~$\iota_2$ to~$\c\times\{0\}$ we obtain
rotations~$j_1$ and $j_2$ of~$\c$ around $\varphi_1=r_2e^{i\theta}$ and $\varphi_2=-r_1e^{-i\theta}$
through $\frac{2\pi}{n_1}$ and $\frac{2\pi}{n_2}$ respectively.
We will use Proposition~\ref{criterion} to prove the following Lemma:


\begin{lemma}
\label{f(0)}
If $|f(0)|\ge2$ for all $f\ne\Id$ in $\<j_1,j_2\>$ and $|h(0)|\ge2$ for all vertical Heisenberg translations $h\ne\Id$ in $\<\iota_1,\iota_2\>$,
then the group $\<\iota_1,\iota_2,\iota_3\>$ is discrete.
\end{lemma}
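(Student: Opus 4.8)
The plan is to verify the three hypotheses of Proposition~\ref{criterion} using the unit spinal sphere \(U\) as the separating surface. Since the Cygan distance from the origin satisfies \(\rho_0((0,0),(\zeta,\nu)) = (|\zeta|^4+\nu^2)^{1/4}\), the set \(U = \{(\zeta,\nu)\in\calN : |\zeta|^4+\nu^2 = 1\}\) is exactly the Cygan sphere of radius \(1\) about \((0,0)\). I would take \(U_2 = \{(\zeta,\nu)\in\calN : |\zeta|^4+\nu^2 < 1\}\) to be the open Cygan unit ball, \(U_1 = \{(\zeta,\nu)\in\calN : |\zeta|^4+\nu^2 > 1\}\cup\{\infty\}\) its exterior together with \(\infty\), and \(V = U_1\setminus\{\infty\}\). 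Then \(U_1\cap U_2 = \varnothing\) and \(V\subsetneq U_1\) are immediate.

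The first step is to identify \(\iota_3\) as the Cygan inversion in \(U\). Using \(c_3 = (0,1,0)^{\mathrm T}\) and \(\mu = -1\), the reflection \(\iota_3\) acts on \(\c^{2,1}\) as \(\mathrm{diag}(-1,1,-1)\); transporting this through the stereographic lift of \((\zeta,\nu)\) gives \(\iota_3 : (\zeta,\nu)\mapsto\big(\zeta/(|\zeta|^2-i\nu),\,-\nu/(|\zeta|^4+\nu^2)\big)\). A short computation then yields \(\rho_0((0,0),\iota_3(\zeta,\nu)) = 1/\rho_0((0,0),(\zeta,\nu))\), so \(\iota_3\) exchanges \(\{\,d>1\,\}\) with \(\{\,0<d<1\,\}\) and sends \(\infty\) to the origin; hence \(\iota_3(U_1) = U_2\).

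The second step treats \(g(U_2)\) for \(g\ne\Id\) in \(\<\iota_1,\iota_2\>\). Each \(\iota_k\) (\(k=1,2\)) is a complex reflection in a vertical chain, so it is a Cygan isometry fixing \(\infty\) and lies in \(\calN\semiprod\U(1)\); thus every such \(g\) is a Cygan isometry fixing \(\infty\), and \(g(U_2)\) is the open Cygan ball of radius \(1\) centred at \(g(0,0)\). Writing \(g = T_w R_\mu\) with \(w=(\xi,\tau)\), one has \(g(0,0)=(\xi,\tau)\), and the image \(\bar g\) of \(g\) under the projection \(\<\iota_1,\iota_2\>\to\<j_1,j_2\>\) satisfies \(\bar g(0)=\xi\). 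I would bound \(\rho_0((0,0),g(0,0)) = (|\xi|^4+\tau^2)^{1/4}\) from below by splitting along this projection, whose kernel is precisely the vertical Heisenberg translations. If \(\bar g\ne\Id\), the first hypothesis gives \(|\xi| = |\bar g(0)|\ge 2\), so \(|\xi|^4\ge 16\); if \(\bar g=\Id\), then \(g\) is a nontrivial vertical translation \(h\) and the second hypothesis gives \(\tau^2\ge 16\). In either case \(|\xi|^4+\tau^2\ge 16\), i.e.\ \(\rho_0((0,0),g(0,0))\ge 2\).

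Finally, the triangle inequality for the Cygan metric shows that any \(x\) with \(\rho_0(g(0,0),x)<1\) satisfies \(\rho_0((0,0),x) > 2-1 = 1\), so \(g(U_2)\subset\{\,d>1\,\}=V\); as \(g(U_2)\) is a bounded ball and \(V\) is unbounded, \(g(U_2)\subsetneq V\). All hypotheses of Proposition~\ref{criterion} then hold, giving discreteness. I expect the main obstacle to be the two structural verifications rather than the final estimate: computing the Cygan action of \(\iota_3\) and confirming it is the inversion that swaps \(U_1\) and \(U_2\), and correctly identifying the kernel of \(\<\iota_1,\iota_2\>\to\<j_1,j_2\>\) with the vertical Heisenberg translations, so that the two hypotheses of the lemma combine to give \(\rho_0((0,0),g(0,0))\ge 2\).
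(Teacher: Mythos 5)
Your overall strategy is sound and in fact streamlines the paper's argument: where the paper introduces the auxiliary cylinder $W=\{(\zeta,\nu)\st|\zeta|=1\}$ and treats the horizontal (rotational) and vertical parts of $g\in\<\iota_1,\iota_2\>$ by separate geometric arguments, you observe that $U$ is the unit Cygan sphere, that $\iota_3$ acts as the Cygan inversion in it, and that every $g\in\<\iota_1,\iota_2\>$ is a Cygan isometry fixing $\infty$, so that $g(U_2)$ is the unit Cygan ball about $g(0,0)$ and disjointness can be tested by the Cygan triangle inequality. Your formula for the action of $\iota_3$, the identity $\rho_0((0,0),\iota_3(\zeta,\nu))=1/\rho_0((0,0),(\zeta,\nu))$, the identification of the kernel of $\<\iota_1,\iota_2\>\to\<j_1,j_2\>$ with the vertical Heisenberg translations, and the estimate in the case $\bar g\ne\Id$ are all correct; your choice $V=U_1\setminus\{\infty\}$ is also cleaner than the paper's.

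There is, however, a genuine gap in the vertical-translation case. The hypothesis $|h(0)|\ge2$ is used in the paper (see the proof of the lemma, and the proofs of Propositions~\ref{prop232}, \ref{prop2242} and~\ref{prop2362}, where the condition actually verified is that the displacement $\nu$ of $H$ satisfies $\nu\ge2$) to mean that the vertical displacement $|\tau|$ of $h=T_{(0,\tau)}$ is at least $2$, i.e.\ at least the Euclidean height of the unit spinal sphere. Under that reading you only get $\tau^2\ge4$, not $\tau^2\ge16$, so $\rho_0((0,0),(0,\tau))=|\tau|^{1/2}\ge\sqrt{2}$, which is not enough for your triangle-inequality step: two unit Cygan balls whose centres are at Cygan distance $\sqrt{2}$ are not forced apart by the triangle inequality. (Your assertion $\tau^2\ge16$ amounts to reading $|h(0)|$ as the Cygan distance $\rho_0((0,0),h(0,0))=|\tau|^{1/2}$, i.e.\ to assuming $|\tau|\ge4$, which is stronger than what the later applications establish.) The conclusion you need is still true, but it requires a direct argument in this case rather than the triangle inequality: $U_2\subset\{(\zeta,\nu)\st|\nu|<1\}$, a slab of height $2$, and a vertical translation by $|\tau|\ge2$ carries this slab off itself, so $g(U_2)\cap U_2=\varnothing$ and $g(U_2)\subsetneq V$. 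With that one case repaired, your proof is complete and constitutes a genuinely different, and arguably tidier, route than the paper's.
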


\begin{proof}
Consider the unit spinal sphere 
\[U=\{(\zeta, \nu)\in\calN\st|\zeta|^4+\nu^2=1\}.\]
The complex reflection~$\iota_3$ in~$C_3$ is given by
\[\iota_3([z_1:z_2:z_3])=[-z_1:z_2:-z_3]=[z_1:-z_2:z_3].\]
The complex reflection~$\iota_3$ preserves the bisector
\[\calC=\{[z:it:1]\in\chp\st |z|^2<1-t^2, z\in\c, t\in\r\}\]
and hence preserves the unit spinal sphere~$U$ which is the boundary of the bisector~$\calC$.
The complex reflection~$\iota_3$ interchanges the points $[0:1:1]$ and $[0:-1:1]$ in~$\chp$,
which correspond to the points $(0,0)$ and $\infty$ in $\calN$.
Therefore, $\iota_3$ leaves $U$ invariant and switches the inside of $U$ with the outside.
\\
\linebreak
Let $U_1$ be the part of $\calN\backslash U$ outside~$U$, containing~$\infty$,
and let $U_2$ be the part inside~$U$, containing the origin.
Clearly
\[U_1\cap U_2=\varnothing\quad\text{and}\quad \iota_3(U_1)=U_2.\]
Therefore, if we find a subset $V\subsetneq U_1$ such that $g(U_2)\subsetneq V$
for all elements $g\ne\Id$ in $\<\iota_1,\iota_2\>$,
then we will show that $\<\iota_1,\iota_2,\iota_3\>$ is discrete.
Let
\[W=\{(\zeta,\nu)\in\calN\st|\zeta|=1\}\]
be the cylinder consisting of all vertical chains through $\zeta\in\c$ with $|\zeta|=1$.
Let
\[
  W_1=\{(\zeta,\nu)\in\calN\st|\zeta|>1\}
  \quad\text{and}\quad
  W_2=\{(\zeta,\nu)\in\calN\st|\zeta|<1\}
\]
be the parts of $\calN\backslash W$ outside and inside the cylinder~$W$ respectively. 
We have $U_2\subset W_2$ and so $g(U_2)\subset g(W_2)$ for all~$g\in\<\iota_1,\iota_2\>$.
The set $W_2$ is a union of vertical chains.
We know that elements of $\<\iota_1,\iota_2\>$ map vertical chains to vertical chains.
There is also a vertical translation on the chain itself.
Therefore, we look at both the intersection of the images of~$W_2$ with $\c\times\{0\}$ and the vertical displacement of~$W_2$. 
\\
\linebreak
Elements of $\<\iota_1,\iota_2\>$ move the intersection of $W_2$ with $\c\times\{0\}$ by rotations $j_1$ and $j_2$
around \(r_2e^{i\theta}\) and \(-r_1e^{-i\theta}\) through $\frac{2\pi}{n_1}$ and $\frac{2\pi}{n_2}$ respectively.
Provided that the interior of the unit circle is mapped completely off itself under all non-identity elements in $\<j_1,j_2\>$,
then the same is true for $W_2$ and hence for $U_2$ under all elements in $\<\iota_1,\iota_2\>$ that are not vertical Heisenberg translations.
\\
\linebreak
A vertical Heisenberg translation will shift~$W_2$ and its images $g(W_2)$ vertically by the same distance,
hence the same is true for $U_2$ and its images $g(U_2)$.
\\
\linebreak
We choose $V$ to be the union of all the images of $U_2$ under all non-vertical elements of $\<\iota_1,\iota_2\>$.
This subset will satisfy the compressing conditions
assuming that the interior of the unit circle is mapped off itself by any non-identity element in $\<j_1,j_2\>$
and that the interior of the unit spinal sphere $U$ is mapped off itself
by any non-identity vertical Heisenberg translation in $\<\iota_1,\iota_2\>$.
Since the radius of the unit circle is preserved under rotations,
we need to show that the origin is moved the distance of at least twice the radius of the circle:
\[|f(0)|\ge2\quad\text{for all}~f\in\<j_1,j_2\>,~f\ne\Id.\]
Since vertical translations shift the spinal spheres vertically,
we need to show that they shift by at least the height of the spinal sphere:
\[|h(0)|\ge2\quad\text{for all vertical Heisenberg translations}~h\in\<\iota_1,\iota_2\>,~h\ne\Id.\]
We see that the conditions of this Lemma ensure that the sets $U_1$, $U_2$ and~$V$ satisfy the conditions of Proposition~\ref{criterion}.
\end{proof}



\section{Subgroup of Heisenberg Translations}
\label{heitran}

\noindent
Let $\Ga=\<\iota_1,\iota_2,\iota_3\>$ be as in section \ref{sec-param} with~$n_3=2$.
In this section we will consider the structure of the subgroup $\calE=\<\iota_1,\iota_2\>$ in more detail.
Projecting the action of the complex reflection~$\iota_k$ to~$\c\times\{0\}$ we obtain
a rotation~$j_k$ of~$\c$ of order~$n_k$.
The action of~$\calE$ projects to the action of the group~$\Lambda$ generated by two rotations of orders~$n_1$ and~$n_2$ respectively.
In the cases considered here, $\Lambda$ is the triangle group of signature~$(3,3,3)$, ~$(2,3,6)$ or~$(2,4,4)$.
We will consider the subgroup~$\calT$ that consists of all Heisenberg translations in~$\calE$.
In each of the cases, we will describe a system of generators~$T_1,T_2,H$ of~$\calT$,
where $T_1$ and~$T_2$ project to translations of the shortest length in~$\Lambda$ and $H$ is vertical.
For the purpose of the following calculations, we will use the notation
$\iota_{a_1a_2...a_n}=\iota_{a_1}\iota_{a_2}...\iota_{a_n}$.

\begin{proposition}
\label{structure-E}
Let $n_k$ be the order of the minimal complex reflection $\iota_k$ for~$k=1,2$.
Let $R=2r\cos(\theta)$.
Let the Heisenberg translations~$T_1=T_{(v_1,t_1)}$ and $T_2=T_{(v_2,t_2)}$ be defined by 
\begin{align*}
&T_1=\iota_{212},~T_2=\iota_{112}
~\text{for}~(n_1,n_2)=(3,3),\\
&T_1=\iota_{21212},~T_2=\iota_{12212}
~\text{for}~(n_1,n_2)=(2,3),\\
&T_1=\iota_{2122},~T_2=\iota_{2212}
~\text{for}~(n_1,n_2)=(2,6),\\
&T_1=\iota_{1122},~T_2=\iota_{12222}
~\text{for}~(n_1,n_2)=(3,6),\\ 
&T_1=\iota_{212},~T_2=\iota_{122}
~\text{for}~(n_1,n_2)=(2,4),\\
&T_1=\iota_{1112},~T_2=\iota_{2111}
~\text{for}~(n_1,n_2)=(4,4).
\end{align*}
Let the Heisenberg translation~$H=T_{(0,\nu)}$ be defined as follows:
If $(p,q,r)$ is a permutation of one of the triples~$(3,3,3)$, $(2,3,6)$, $(2,4,4)$ and~$(n_1,n_2)=(p,q)$ then
\[H=(\iota_{12})^r.\]
Then we have
\begin{enumerate}[(a)]
\item
\begin{enumerate}[(1)]
\item
For~$(n_1,n_2)=(3,3)$,
\begin{align*}
&v_1=iR\sqrt{3},~
v_2=e^{\pi i/6}R\sqrt{3},~
|v_1|=|v_2|=R\sqrt{3},~
\nu=6R^2\sqrt{3}.
\end{align*}
\item
For~$(n_1,n_2)=(2,3)$,
\begin{align*}
&v_1=i2R\sqrt{3},~
v_2=e^{\pi i/6}2R\sqrt{3},~
|v_1|=|v_2|=2R\sqrt{3},~
\nu=24R^2\sqrt{3}.
\end{align*}
\item
For~$(n_1,n_2)=(2,6)$,
\begin{align*}
&v_1=e^{\pi i/3}2R,~
v_2=e^{2\pi i/3}2R,~
|v_1|=|v_2|=2R,~
\nu=4R^2\sqrt{3}.
\end{align*}
\item
For~$(n_1,n_2)=(3,6)$,
\begin{align*}
&v_1=e^{\pi i/6}R\sqrt{3},~
v_2=e^{-\pi i/6}R\sqrt{3},~
|v_1|=|v_2|=R\sqrt{3},~
\nu=2R^2\sqrt{3}.
\end{align*}
\item
For~$(n_1,n_2)=(2,4)$:
\begin{align*}
&v_1=i2R,~
v_2=2R,~
|v_1|=|v_2|=2R,~
\nu=16R^2.
\end{align*}
\item
For~$(n_1,n_2)=(4,4)$:
\begin{align*}
&v_1=e^{\pi i/4}R\sqrt{2},~
v_2=e^{3\pi i/4}R\sqrt{2},~
|v_1|=|v_2|=R\sqrt{2},~
\nu=4R^2.
\end{align*}
\end{enumerate}
\item
In all cases, $[T_1,T_2]=(0,\pm\,a\cdot\nu)=H^{\pm a}$,
where
\begin{align*}
&a=1~\text{for}~(n_1,n_2)=(3,3),(2,3),(2,4),\\
&a=2~\text{for}~(n_1,n_2)=(4,4),(2,6),\\
&a=3~\text{for}~(n_1,n_2)=(3,6)
\end{align*}
and
\begin{align*}
  \nu&=\frac{32R^2\sqrt{3}}{\gcd(n_1,n_2)}
  \sin^2\left(\frac{\pi}{n_1}\right)
  \sin^2\left(\frac{\pi}{n_2}\right)
  ~\text{for}~(n_1,n_2)=(3,3),(2,3),(2,6),(3,6),\\
  \nu&=\frac{64R^2}{\gcd(n_1,n_2)}
  \sin^2\left(\frac{\pi}{n_1}\right)
  \sin^2\left(\frac{\pi}{n_2}\right)
  ~\text{for}~(n_1,n_2)=(2,4),(4,4).
\end{align*}
\item
The group~$\calT$ of all Heisenberg translations in~$\calE$
is a normal subgroup of~$\calE$ generated by~$T_1$, $T_2$ and~$H$.
\item
The quotient group~$\calE/\calT$ is cyclic:
\begin{align*}
&\calE/\calT=\<\iota_{12}\>\quad\text{for}~(n_1,n_2)=(2,3),\\
&\calE/\calT=\<\iota_{2}\>
\quad\text{for}~(n_1,n_2)=(3,3),(2,6),(3,6),(2,4),(4,4).
\end{align*}
We will use the following sets of representatives~$\Si$ of elements of~$\calE/\calT$:
\begin{align*}
&\Si=\{1,\iota_1,\iota_{11}\}
~\text{for}~(n_1,n_2)=(3,3),\\
&\Si=\{1,\iota_1,\iota_2,\iota_{12},\iota_{22},\iota_{122}\}
~\text{for}~(n_1,n_2)=(2,3),\\
&\Si=\{1,\iota_1,\iota_2,\iota_{21},\iota_{22},\iota_{221}\}
~\text{for}~(n_1,n_2)=(2,6),\\ 
&\Si=\{1,\iota_2,\iota_{22},\iota_{112},\iota_{221},\iota_{222}\}
~\text{for}~(n_1,n_2)=(3,6),\\
&\Si=\{1,\iota_1,\iota_2,\iota_{21}\}~\text{for}~(n_1,n_2)=(2,4),\\
&\Si=\{1,\iota_1,\iota_{11},\iota_{111}\}
~\text{for}~(n_1,n_2)=(4,4).
\end{align*}
\item
$T_1,T_2,H$ satisfy the relations
\[[H,T_1]=[H,T_2]=1,\quad [T_1,T_2]=H^{\pm a}.\]
Every element of~$\calT$ is of the form $T_1^xT_2^yH^n$ for some $x,y,n\in\mathbb{Z}$.
\item
The shortest non-trivial vertical translations in~$\calE$ are~$H^{\pm1}$.
\item
Let $w_1=\frac{1}{2}(v_2-v_1)$ and $w_2=\frac{1}{2}(v_1+v_2)$.
\begin{enumerate}[(1)]
\item
For~$(n_1,n_2)=(3,3)$,
\begin{align*}
&w_1=(\sqrt{3}-i)R\cdot\sqrt{3}/4,~
w_2=(i\sqrt{3}+1)R\cdot 3/4,\\
&|w_1|=R\sqrt{3}/2,~|w_2|=3R/2,~
w_2=i\sqrt{3}\cdot w_1.
\end{align*}
\item
For~$(n_1,n_2)=(2,3)$,
\begin{align*}
&w_1=(\sqrt{3}-i)R\cdot\sqrt{3}/2,~
w_2=(i\sqrt{3}+1)R\cdot 3/2,\\
&|w_1|=R\sqrt{3},~|w_2|=3R,~
w_2=i\sqrt{3}\cdot w_1.
\end{align*}
\item
For~$(n_1,n_2)=(2,6)$,
\begin{align*}
&w_1=-R,~
w_2=iR\sqrt{3},\\
&|w_1|=R,~|w_2|=R\sqrt{3},~
w_2=-i\sqrt{3}\cdot w_1.
\end{align*}
\item
For~$(n_1,n_2)=(3,6)$,
\begin{align*}
&w_1=-iR\cdot\sqrt{3}/2,~
w_2=3R/2,\\
&|w_1|=R\sqrt{3}/2,~|w_2|=3R/2,~
w_2=i\sqrt{3}\cdot w_1.
\end{align*}
\item
For~$(n_1,n_2)=(2,4)$:
\begin{align*}
&w_1=(1-i)\cdot R,~
w_2=(i+1)\cdot R,\\
&|w_1|=|w_2|=R\sqrt{2},~
w_2=i\cdot w_1.
\end{align*}
\item
For~$(n_1,n_2)=(4,4)$:
\begin{align*}
&w_1=-R,~
w_2=iR,\\
&|w_1|=|w_2|=R,~
w_2=-i\cdot w_1.
\end{align*}
\end{enumerate}
\end{enumerate}
\end{proposition}

\begin{remark}
The group~$\calT$ has the presentation
\[
  \calT
  =\<T_1,T_2,H\:|\:[T_1,T_2]=H^a,[H,T_1]=[H,T_2]=1\>
\]
and is isomorphic to the uniform lattice~$N_a$ as defined in section~\ref{heisenberg-group}.
\end{remark}

\begin{proof}
~~~
\begin{enumerate}[(a)]
\item
The expressions for~$v_1$, $v_2$ and~$\nu$ are found by direct computation using the matrices of the complex reflections~$\iota_1$ and~$\iota_2$ or the formula
\[R_{\mu}\circ T_{(\xi,\nu)}=T_{(\mu\xi,\nu)}\circ R_{\mu}.\]
\item
Using
\[[T_1,T_2]=[(v_1,t_1),(v_2,t_2)]=(0,4\Im(v_1\bar{v}_2)),\]
we calculate the commutator
\begin{align*}
&[T_1,T_2]=(0,6R^2\sqrt{3})=(0,\nu)=H~\text{for}~(n_1,n_2)=(3,3),\\
&[T_1,T_2]=(0,24R^2\sqrt{3})=(0,\nu)=H~\text{for}~(n_1,n_2)=(2,3),\\
&[T_1,T_2]=(0,-8R^2\sqrt{3})=(0,-2\nu)=H^{-2}~\text{for}~(n_1,n_2)=(2,6),\\
&[T_1,T_2]=(0,6R^2\sqrt{3})=(0,3\nu)=H^3~\text{for}~(n_1,n_2)=(3,6),\\
&[T_1,T_2]=(0,16R^2)=(0,\nu)=H~\text{for}~(n_1,n_2)=(2,4),\\
&[T_1,T_2]=(0,-8R^2)=(0,-2\nu)=H^{-2}~\text{for}~(n_1,n_2)=(4,4),
\end{align*}
hence in all cases
\[[T_1,T_2]=H^{\pm a}.\]
\item
Every element in~$\calE$ can be written
as a product of~$\iota_1$ and~$\iota_2$
using $\iota_k^{-1}=\iota_k^{n_k-1}$ for~$k=1,2$.
\begin{enumerate}[(1)]
\item
$(n_1,n_2)=(3,3)$:
Using
\begin{align*}
&\iota_{221}=T_2^{-1},~
\iota_{122}=T_2T_1^{-1},~
\iota_{211}=T_1T_2^{-1},\\
&\iota_{121}=T_2T_1^{-1}T_2^{-1},
\end{align*}
we can write every element in~$\calE$
as a product of an element of~$\<T_1,T_2\>$
and a word in~$\iota_1$ and~$\iota_2$
of length at most~$2$.
A word of length at most~$2$ is a Heisenberg translation
if and only if it is the identity,
hence $\calT=\<T_1,T_2\>$.
\item
$(n_1,n_2)=(2,3)$:
Using
\begin{align*}
&\iota_{12122}=T_2\cdot T_1^{-1},~
\iota_{12212}=T_2,~
\iota_{21212}=T_1,\\
&\iota_{21221}=T_1\cdot T_2^{-1},~
\iota_{22121}=T_2^{-1},\\
&\iota_{12121}=T_2\cdot T_1^{-1}\cdot\iota_{221},~ \iota_{22122}=T_1^{-1}\cdot\iota_{21},
\end{align*}
we can write every element in~$\calE$
as a product of an element of~$\<T_1,T_2\>$
and a word in~$\iota_1$ and~$\iota_2$
of length at most~$4$.
A word of length at most~$4$ is a Heisenberg translation
if and only if it is the identity,
hence $\calT=\<T_1,T_2\>$.
\item
$(n_1,n_2)=(2,6)$:
Using
\begin{align*}
&\iota_{1222}=H\cdot T_2^{-1}\cdot T_1,~
\iota_{2122}=T_1,~
\iota_{2212}=T_2,\\
&\iota_{2221}=T_1^{-1}\cdot T_2\cdot H^{-1},\\
&\iota_{1212}=H\cdot T_2^{-1}\cdot\iota_{22},~
\iota_{1221}=H\cdot T_2^{-1}\cdot T_1\cdot T_2^{-1}\cdot \iota_{22},\\
&\iota_{2121}=T_1\cdot T_2^{-1}\cdot\iota_{22},~
\iota_{2222}=T_1^{-1}\cdot \iota_{21},
\end{align*}
we can write every element in~$\calE$
as a product of an element of~$\<T_1,T_2,H\>$
and a word in~$\iota_1$ and~$\iota_2$
of length at most~$3$.
A word of length at most~$3$ is a Heisenberg translation
if and only if it is the identity,
hence $\calT=\<T_1,T_2,H\>$.
\item
$(n_1,n_2)=(3,6)$:
Using
\begin{align*}
&\iota_{1122}=T_1,~
\iota_{1221}=T_2\cdot T_1^{-1},~
\iota_{2112}=T_1\cdot T_2^{-1}\cdot H^{2},\\
&\iota_{2121}=H,~
\iota_{2211}=T_2^{-1},\\
&\iota_{1121}=T_2\cdot H\cdot\iota_{2},~
\iota_{1211}=H^{-1}\cdot T_2\cdot T_1^{-1}\cdot \iota_{2},\\
&\iota_{1222}=T_2\cdot H^{-1}\cdot\iota_{121},~
\iota_{2122}=H\cdot\iota_{112},~
\iota_{2212}=T_2^{-1}\cdot \iota_{112},\\
&\iota_{2221}=T_2^{-1}\cdot H\cdot T_1^{-1}\cdot \iota_{112},~
\iota_{2222}=T_1^{-1}\cdot\iota_{11},
\end{align*}
we can write every element in~$\calE$
as a product of an element of~$\<T_1,T_2,H\>$
and a word in~$\iota_1$ and~$\iota_2$
of length at most~$3$.
A word of length at most~$3$ is a Heisenberg translation
if and only if it is the identity,
hence $\calT=\<T_1,T_2,H\>$.
\item
$(n_1,n_2)=(2,4)$:
Using
\begin{align*}
&\iota_{121}=T_2\cdot T_1^{-1}\cdot\iota_{2},~
\iota_{122}=T_2,~
\iota_{212}=T_1,~
\iota_{221}=T_2^{-1},\\
&\iota_{222}=T_1^{-1}\cdot\iota_{21},
\end{align*}
we can write every element in~$\calE$
as a product of an element of~$\<T_1,T_2\>$
and a word in~$\iota_1$ and~$\iota_2$
of length at most~$2$.
A word of length at most~$2$ is a Heisenberg translation
if and only if it is the identity,
hence $\calT=\<T_1,T_2\>$.
\item
$(n_1,n_2)=(4,4)$:
Using
\begin{align*}
&\iota_{1112}=T_1,~
\iota_{1121}=T_1\cdot H^{-1}\cdot T_2^{-1}\cdot T_1^{-1},~
\iota_{1122}=T_1\cdot H^{-1}\cdot T_2^{-1},\\
&\iota_{1211}=H\cdot T_1^{-1},~
\iota_{1212}=H,~
\iota_{1221}=T_2^{-1}\cdot T_1^{-1},~
\iota_{1222}=T_2^{-1},\\
&\iota_{2111}=T_2,~
\iota_{2112}=T_2\cdot T_1,~
\iota_{2121}=T_2\cdot T_1\cdot H^{-1}\cdot T_2^{-1}\cdot T_1^{-1},\\
&\iota_{2122}=T_2\cdot T_1\cdot H^{-1}\cdot T_2^{-1},~
\iota_{2211}=T_2\cdot H\cdot T_1^{-1},\\
&\iota_{2212}=T_2\cdot H,~
\iota_{2221}=T_1^{-1},
\end{align*}
we can write every element in~$\calE$
as a product of an element of~$\<T_1,T_2.H\>$
and a word in~$\iota_1$ and~$\iota_2$
of length at most~$3$.
A word of length at most~$3$ is a Heisenberg translation
if and only if it is the identity,
hence $\calT=\<T_1,T_2,H\>$.
\end{enumerate}
\item
\begin{enumerate}[(1)]
\item
$(n_1,n_2)=(3,3)$:
$
 \iota_1\cdot T_2=\iota_2
 \Longrightarrow
 [\iota_1]=[\iota_2]
$,
hence every element of $\calE/\calT$ is a power of the element~$[\iota_2]$ of order~$3$.
The elements of~$\calE/\calT$ can be represented by the elements of $\Si=\{1,\iota_1,\iota_{11}\}$
as 
\[
[\iota_2]=[\iota_1],~
[\iota_2]^2=[\iota_{11}].
\]
\item
$(n_1,n_2)=(2,3)$:
\begin{align*}
 &\iota_1\cdot T_1=\iota_{121212}
 \Longrightarrow
 [\iota_1]=[\iota_{12}]^3,\\
 &\iota_2\cdot T_2^{-1}T_1=\iota_{12121212}
 \Longrightarrow
 [\iota_2]=[\iota_{12}]^4,
\end{align*}
hence every element of $\calE/\calT$ is a power of the element~$[\iota_{12}]$ of order~$6$.
The elements of~$\calE/\calT$ can be represented by the elements of
$\Si=\{1,\iota_1,\iota_2,\iota_{12},\iota_{22},\iota_{122}\}$
as 
\[
[\iota_{12}]^2=[\iota_{22}],~
[\iota_{12}]^3=[\iota_1],~
[\iota_{12}]^4=[\iota_2],~
[\iota_{12}]^5=[\iota_{122}].
\]
\item
$(n_1,n_2)=(2,6)$:
$
 \iota_1\cdot HT_2^{-1}T_1=\iota_{222}
 \Longrightarrow
 [\iota_1]=[\iota_2]^3
$,
hence every element of $\calE/\calT$ is a power of the element~$[\iota_2]$ of order~$6$.
The elements of~$\calE/\calT$ can be represented by the elements of
$\Si=\{1,\iota_1,\iota_2,\iota_{21},\iota_{22},\iota_{221}\}$
as 
\[
[\iota_2]^2=[\iota_{22}],~
[\iota_2]^3=[\iota_1],~
[\iota_2]^4=[\iota_{21}],~
[\iota_2]^5=[\iota_{221}].
\]
\item 
$(n_1,n_2)=(3,6)$:
$
 T_2^{-1}\cdot\iota_{1}=\iota_{22}
 \Longrightarrow
 [\iota_1]=[\iota_2]^2
$,
hence every element of $\calE/\calT$ is a power of the element~$[\iota_2]$ of order~$6$.
The elements of~$\calE/\calT$ can be represented by the elements of
$\Si=\{1,\iota_2,\iota_{22},\iota_{112},\iota_{221},\iota_{222}\}$
as 
\[
[\iota_2]^2=[\iota_{22}],~
[\iota_2]^3=[\iota_{222}],~
[\iota_2]^4=[\iota_{221}],~
[\iota_2]^5=[\iota_{112}].
\]
\item
$(n_1,n_2)=(2,4)$:
$
 T_2^{-1}\cdot\iota_{1}=\iota_{22}
 \Longrightarrow
 [\iota_1]=[\iota_2]^2
$,
hence every element of $\calE/\calT$ is a power of the element~$[\iota_2]$ of order~$4$.
The elements of~$\calE/\calT$ can be represented by the elements of
$\Si=\{1,\iota_1,\iota_2,\iota_{21}\}$
as 
\[
[\iota_2]^2=[\iota_1],~
[\iota_2]^3=[\iota_{21}].
\]
\item
$(n_1,n_2)=(4,4)$:
$
 T_2\cdot\iota_{1}=\iota_{2}
 \Longrightarrow
 [\iota_1]=[\iota_2]
$,
hence every element of $\calE/\calT$ is a power of the element~$[\iota_2]$ of order~$4$.
The elements of~$\calE/\calT$ can be represented by the elements of $\Si=\{1,\iota_1,\iota_{11},\iota_{111}\}$
as 
\[
[\iota_2]=[\iota_1],~
[\iota_2]^2=[\iota_{11}],~
[\iota_2]^3=[\iota_{111}].
\]
\end{enumerate}
\item
Vertical translations are central in~$\calN$,
hence $[H,T_1]=[H,T_2]=1$.
Using the relations $HT_1=T_1H$, $HT_2=T_2H$ and~$T_1T_2=H^{\pm a}T_2T_1$,
we can rearrange every element in~$\calT=\<T_1,T_2,H\>$
into the form $T_1^xT_2^yH^n$ for some $x,y,n\in\mathbb{Z}$.
\item
For an element $T_1^xT_2^yH^n\in\calE$ to be vertical, we need $x=y=0$,
hence any vertical element of~$\calE$ is some power of~$H$.
\item
We can calculate~$w_1$ and~$w_2$
using~$v_1$ and~$v_2$ calculated in the previous parts.
\end{enumerate}
\end{proof}

\begin{remark}
An alternative approach to the understanding of the structure of the subgroup~$E=\<\iota_1,\iota_2\>$ is outlined in Appendix~\ref{Dekimpe} and uses the classification of almost-crystallographic groups by Dekimpe~\cite{dekimpe}.
\end{remark}

\section{Discreteness Results}
\label{disres}

\noindent
Let $\Ga=\<\iota_1,\iota_2,\iota_3\>$ be as in section~\ref{sec-param} with~$n_3=2$ and $m_1=m_2$, i.e.\ $r_1=r_2=r$.
We will now use Lemma~\ref{f(0)} to find conditions for the group~$\Ga$ to be discrete for all possible orders~$\{n_1,n_2\}$ of the complex reflections.

\begin{proposition}
\label{discr-crit}
Let $\calE$, $\calT$ and~$\Si$ be as in Proposition~\ref{structure-E}.
Let $\bar{\Si}\subset\La=\<j_1,j_2\>$ be the image of~$\Si$ under the projection~$\calE\to\La$.
Let $T_1,T_2,H$ be the Heisenberg translations by~$(v_1,t_1)$, $(v_2,t_2)$, $(0,\nu)$
and $w_1=\frac{1}{2}(v_2-v_1)$,
$w_2=\frac{1}{2}(v_1+v_2)$
as in Proposition~\ref{structure-E}.
We will reformulate the conditions of Lemma~\ref{f(0)}:
\begin{enumerate}[(1)]
\item
The condition $|f(0)|\ge2$ for~$f=j_2$ is equivalent to $r\sin(\pi/n_2)\ge1$.
\item
The conditions $r\sin(\pi/n_2)\ge1$
and
\[
  g(u,v)
  =(u-a)^2+d\cdot(v-b)^2-\frac{4r^2\sin^2(\pi/n_2)}{|w_1|^2}
  \ge0
\]
for every~$\bar{\si}\in\bar{\Si}$ and~$(u,v)\in\z^2$ with $u\equiv v\mod2$
satisfying
\[
  |u-a|<\frac{2r\sin(\pi/n_2)}{|w_1|},\quad
  |v-b|<\frac{2r\sin(\pi/n_2)}{|w_1|\sqrt{d}},
\]
where 
\begin{align*}
  p&=\bar{\si}(0),\quad
  a=-\frac{\Re(p\bar{w}_1)}{|w_1|^2},\quad
  b=-\frac{\Re(p\bar{w}_2)}{|w_2|^2}
\end{align*}
and
\begin{align*}
  d&=3~\text{for}~(n_1,n_2)=(3,3),(2,3),(2,6),(3,6),\\ 
  d&=1~\text{for}~(n_1,n_2)=(2,4),(4,4). 
\end{align*}
imply
$|f(0)|\ge2$ for all~$f\in\La\backslash\{\Id\}$.
\item
The condition
$\abs{h(0)}\ge2$ for all vertical Heisenberg translations~$h\in\calE\backslash\{\Id\}$
is equivalent to~$\nu\ge2$.
\end{enumerate}
\end{proposition}

\begin{proof}
The group $\<j_1,j_2\>$ is the projection to $\c$ of the group $\calE$.
Projecting~$\iota_1$ and~$\iota_2$ to~$\c$,
we obtain the rotations~$j_1$ and~$j_2$ of~$\c$
through~$2\pi/n_1$ and~$2\pi/n_2$ around $\varphi_1$ and $\varphi_2$ respectively.
These rotations are given by
$j_k(z)=\mu_k\cdot z+(1-\mu_k)\cdot\varphi_k$ for~$k=1,2$,
where $\mu_k=\exp(2\pi i/n_k)$.
\begin{enumerate}[(1)]
\item
We have $j_2(z)=\mu_2\cdot z+(1-\mu_2)\cdot\varphi_2$, therefore
$j_2(0)=(1-\mu_2)\cdot\varphi_2$
and
\[
  |j_2(0)|
  =|1-\mu_2|\cdot|\varphi_2|
  =|1-\exp(2\pi i/n_2)|\cdot r
  =2r\sin(\pi/n_2).
\]
\item
According to Proposition~\ref{structure-E},
every element of~$\calE$ is of the form $T_1^xT_2^yH^n\si$ for some $x,y,n\in\z$ and $\si\in\Si$.
The projection to~$\c$ maps $H$ to the identity,
$T_k$ to the Euclidean translation by $v_k$,
$T_1^xT_2^yH^n$ to the Euclidean translation by $x v_1+y v_2$ and $\si$ to a rotation~$\bar{\si}\in\bar{\Si}$ respectively.
Therefore every element of $\La$ is a product of a translation by $xv_1+yv_2$ for some~$x,y\in\z$ and a rotation in~$\bar{\Si}$.
Hence every point in the orbit of~$0$ under $\La$ is of the form $p+xv_1+yv_2$, where $x,y\in\z$ and
$p=\bar{\si}(0)$ for some $\bar{\si}\in\bar{\Si}$.
The vectors $w_1=\frac{1}{2}(v_2-v_1)$
and $w_2=\frac{1}{2}(v_1+v_2)$ introduced in Proposition~\ref{structure-E} will be more useful in calculations than~$v_1$ and~$v_2$.
We know that
\[w_2=\pm i w_1\sqrt{d},\]
where
\begin{align*}
  d&=3~\text{for}~(n_1,n_2)=(3,3),(2,3),(2,6),(3,6),\\ 
  d&=1~\text{for}~(n_1,n_2)=(2,4),(4,4). 
\end{align*}
We can rewrite every point in the orbit of~$0$ under $\La$ as
\begin{align*}
  p+xv_1+yv_2
  &=p+x(-w_1+w_2)+y(w_1+w_2)\\
  &=p+(y-x)w_1+(x+y)w_2\\
  &=p+uw_1+vw_2,
\end{align*}
where $u=y-x$ and $v=x+y$.
Points $(x,y)\in\z^2$ are mapped to points $(u,v)\in\z^2$ with $u\equiv v\mod 2$.
The condition $|f(0)|\ge2$ is equivalent to
\begin{align*}
&|p+uw_1+vw_2|^2\\
&=u^2|w_1|^2+v^2|w_2|^2+2uv\Re(w_1\bar{w}_2)+2u\Re(p\bar{w}_1)+2v\Re(p\bar{w}_2)+|p|^2.
\end{align*}
Note that $w_2=\pm i w_1\sqrt{d}$ implies $\Re(w_1\bar{w}_2)=0$, hence
\begin{align*}
&|p+uw_1+vw_2|^2\\
&=u^2|w_1|^2+v^2|w_2|^2+2u\Re(p\bar{w}_1)+2v\Re(p\bar{w}_2)+|p|^2.
\end{align*}
Setting
\begin{align*}
  &a=-\frac{\Re(p\bar{w}_1)}{|w_1|^2},\quad
  b=-\frac{\Re(p\bar{w}_2)}{|w_2|^2},
\end{align*}
we can rewrite this as 
\begin{align*}
&|p+uw_1+vw_2|^2\\
&=u^2|w_1|^2+v^2|w_2|^2-2ua|w_1|^2-2vb|w_2|^2+|p|^2\\
&=|w_1|^2\cdot(u^2-2ua)+|w_2|^2\cdot(v^2-2vb)+|p|^2\\
&=|w_1|^2\cdot(u^2-2ua+a^2)+|w_2|^2\cdot(v^2-2vb+b^2)+(|p|^2-a^2|w_1|^2-b^2|w_2|^2)\\
&=|w_1|^2\cdot\bigg((u-a)^2+d\cdot(v-b)^2\bigg)+\bigg(|p|^2-(a^2+db^2)|w_1|^2\bigg).
\end{align*}
We note that $w_2=\pm i w_1\sqrt{d}$ implies
\[\Re(p\bar{w}_2)=\pm\sqrt{d}\Im(p\bar{w}_1),\]
hence
\begin{align*}
    a^2+db^2
    &=\frac{\Re^2(p\bar{w}_1)}{|w_1|^4}
    +d\cdot\frac{\Re^2(p\bar{w}_2)}{|w_2|^4}
    =\frac{\Re^2(p\bar{w}_1)}{|w_1|^4}
    +d\cdot\frac{d\cdot\Im^2(p\bar{w}_1)}{d^2\cdot|w_1|^4}\\
    &=\frac{\Re^2(p\bar{w}_1)+\Im^2(p\bar{w}_1)}{|w_1|^4}
    =\frac{|p\bar{w}_1|^2}{|w_1|^4}
    =\frac{|p|^2}{|w_1|^2}.
\end{align*}
Therefore
\begin{align*}
|p+uw_1+vw_2|^2
=|w_1|^2\cdot((u-a)^2+d\cdot(v-b)^2).
\end{align*}
We will test the condition
\[|p+uw_1+vw_2|\ge2r\sin(\pi/n_2)\]
for all~$(u,v)\in\z^2$ with~$u\equiv v\mod2$
(excluding the case $p=0$, $u=v=0$)
which, combined with $r\sin(\pi/n_2)\ge1$, 
would imply
\[|p+uw_1+vw_2|\ge2.\]
The condition
\[|p+uw_1+vw_2|\ge2r\sin(\pi/n_2)\]
is equivalent to
\[
  (u-a)^2+d\cdot(v-b)^2
  \ge\frac{4r^2\sin^2(\pi/n_2)}{|w_1|^2}
\]
for all~$(u,v)\in\z^2$ with~$u\equiv v\mod2$,
excluding the case $a=b=u=v=0$.
Note that this inequality follows immediately
if
\[
  |u-a|\ge\frac{2r\sin(\pi/n_2)}{|w_1|}
  \quad\text{or}\quad
  |v-b|\ge\frac{2r\sin(\pi/n_2)}{|w_1|\sqrt{d}},
\]
so we only need to check that
\[
  g(u,v)
  =(u-a)^2+d(v-b)^2-\frac{4r^2\sin^2(\pi/n_2)}{|w_1|^2}\ge0
\]
for all $(u,v)\in\z^2$ with $u\equiv v\mod2$ inside the rectangle where
\[
  |u-a|<\frac{2r\sin(\pi/n_2)}{|w_1|},\quad
  |v-b|<\frac{2r\sin(\pi/n_2)}{|w_1|\sqrt{d}}.
\]
\item
The second condition is $|h(0)|\ge2$
for every vertical translation~$h$ in $\<\iota_1,\iota_2\>$.
We know that~$h=H^n$, where $H$ is the vertical translation by~$(0,\nu)$.
We need $n\cdot\nu\ge2$ for all $n\not=0$,
i.e.\ $\nu\ge2$.\qedhere
\end{enumerate}
\end{proof}
\noindent
For the purposes of the following calculations, we will denote $t=\tan(\theta)$ and $\tau=\tan(\pi/12)$.

\subsection{Proof of Proposition \ref{prop-discr}}

\begin{proof}
We will show that under the assumptions on~$\al$ in Proposition~\ref{prop-discr} the condition 
\[r=\cosh(m/2)\ge\frac{1}{\sin(\pi/n_2)}\]
is sufficient for $\Ga$ to be discrete.
To this end, we will check the conditions of Proposition~\ref{discr-crit}.
\\
\linebreak
We will show that the second condition of Proposition~\ref{discr-crit}, $\nu\ge2$, is always satisfied:
According to Proposition~\ref{structure-E}(a),
\[\nu\ge2R^2\sqrt{3}=8r^2\cos^2(\theta)\sqrt{3}.\]
We have $n_2\in\{3,4,6\}$ and hence
\[
  r^2
  \ge\frac{1}{\sin^2(\pi/n_2)}
  \ge\frac{1}{\sin^2(\pi/3)}
  =\frac43.
\]
In all the cases we assume that $|t|\le2\sqrt{2}-\sqrt{3}$ and hence
\[
  \cos^2(\theta)=\frac{1}{t^2+1}
  \ge\frac{\sqrt{3}+\sqrt{2}}{4\sqrt{3}}. 
\]
Therefore
\begin{align*}
  |\nu|
  \ge8r^2\cos^2(\theta)\sqrt{3}
  \ge\frac{8}{3}\cdot(\sqrt{3}+\sqrt{2})>2.
\end{align*}
\noindent
It remains to check that the first condition of Proposition~\ref{discr-crit},
\[
  g(u,v)
  =(u-a)^2+d\cdot(v-b)^2-\frac{4r^2\sin^2(\pi/n_2)}{|w_1|^2}
  \ge0
\]
for every~$\bar{\si}\in\bar{\Si}$ and~$(u,v)\in\z^2$ with $u\equiv v\mod2$ and
\begin{align*}
  &u\in\left(a-\frac{2r\sin(\pi/n_2)}{|w_1|},a+\frac{2r\sin(\pi/n_2)}{|w_1|}\right),\\
  &v\in\left(b-\frac{2r\sin(\pi/n_2)}{|w_1|\sqrt{d}},b+\frac{2r\sin(\pi/n_2)}{|w_1|\sqrt{d}}\right),
\end{align*}
is satisfied in each of the six cases.
For each pair~$(n_1,n_2)$ and for each $p=\bar{\si}(0)$, we will calculate~$a$ and~$b$, list the bounds on~$a$ and~$b$ and hence determine the bounds on~$u$ and~$v$.
We then check that $g(u,v)\ge0$ for all~$(u,v)$ satisfying the conditions above.
\begin{enumerate}[(1)]
\item
Case $(n_1,n_2)=(3,3)$:
This case has already been discussed in~\cite{povprat}.
We include it here for completeness and to show that the condition $|t|\le1/\sqrt{3}$ in this proof cannot be relaxed.
We list the values of~$a$, $b$ and~$a^2+3b^2$ for all $p=\bar{\si}(0)$ with $\si\in\Si$:
\begin{center}
\def\arraystretch{1.5}
\begin{tabular}{|c|c|c|c| }
\hline
\(p\) & \(a\) & \(b\) & \(a^2+3b^2\) \\
\hline
\(0\) & \(0\) & \(0\) & \(0\) \\
\hline
$j_1(0)$ & $-1$ & $-t/\sqrt{3}$ & $t^2+1$ \\
\hline
$j_{11}(0)$ & $\frac{1}{2}(t\sqrt{3}-1)$ & $-\frac16(t\sqrt{3}+3)$ & $t^2+1$ \\
\hline
\end{tabular}
\end{center}
We calculate
\[g(u,v)=u^2+3v^2-2au-6bv+(a^2+3b^2)-(t^2+1).\]
We only need to consider
$u\in(a-\ga\sqrt{3},a+\ga\sqrt{3})$,
$v\in(b-\ga,b+\ga)$,
where $\ga=\sqrt{\frac{t^2+1}{3}}$.
We assume $|t|\le1/\sqrt{3}$.
\begin{itemize}
\item
$p=0$:
We consider $u,v\in(-1,1)$.
The only such~$u,v\in\z$ with~$u\equiv v\mod2$
are $u=v=0$,
which corresponds to the excluded case $f=\Id$.
\item
$p=j_1(0)$:
We consider $u\in(-3,1)$, $v\in[-1,1]$.
The only such $u,v\in\z$ with~$u\equiv v\mod2$
are $(u,v)=(-2,0),(-1,-1),(-1,1),(0,0)$.
The function
\[g(u,v)=u^2+3v^2+2u+2tv\sqrt{3}\]
is non-negative:
For $|t|\le1/\sqrt{3}$,
\begin{align*}
  &g(-2,0)=0,\quad
  g(-1,-1)=-2\sqrt{3}\left(t-\frac{1}{\sqrt{3}}\right)\ge0,\\
  &g(0,0)=0,\quad
  g(-1,1)=2\sqrt{3}\left(t+\frac{1}{\sqrt{3}}\right)\ge0.
\end{align*}
Note that this discreteness proof does not work for $|t|>1/\sqrt{3}$ as then we have either $g(-1,-1)<0$ or $g(-1,1)<0$.
\item
$p=j_{11}(0)$:
We consider $u\in(-3,2)$, $v\in(-2,1)$.
The only such~$u,v\in\z$ with~$u\equiv v\mod2$
are $(u,v)=(-2,0),(-1,-1),(0,0),(1,-1)$.
The function
\[g(u,v)=u^2+3v^2-(t\sqrt{3}-1)u+(t\sqrt{3}+3)v\]
is non-negative:
For $|t|\le1/\sqrt{3}$,
\begin{align*}
  &g(-1,-1)=0,\quad
  g(-2,0)=2\sqrt{3}\left(t+\frac{1}{\sqrt{3}}\right)\ge0,\\
  &g(0,0)=0,\quad
  g(1,-1)=-2\sqrt{3}\left(t-\frac{1}{\sqrt{3}}\right)\ge0.
\end{align*}
Note that this discreteness proof does not work for $|t|>1/\sqrt{3}$ as then we have either $g(-2,0)<0$ or $g(1,-1)<0$.
\end{itemize}
\item
Case $(n_1,n_2)=(2,3)$:
We list the values of~$a$, $b$ and~$a^2+3b^2$ for all $p=\bar{\si}(0)$ with $\si\in\Si$: 
\begin{center}
\def\arraystretch{1.5}
\begin{tabular}{|c|c|c|c|c| }
\hline\(p\) & \(a\) & \(b\) & \(a^2+3b^2\) \\
\hline\(0\) & \(0\) & \(0\) & \(0\) \\
\hline\(j_1(0)\) & \(\frac{1}{2\sqrt{3}}\left(t-\sqrt{3}\right)\) & \(-\frac{1}{2\sqrt{3}}\left(t+\frac{1}{\sqrt{3}}\right)\) & \(\frac{1}{3}\left(t^2+1\right)\) \\
\hline\(j_2(0)\) & \(\frac{1}{2}\) & \(-\frac{t}{2\sqrt{3}}\) & \(\frac{1}{4}\left(t^2+1\right)\) \\
\hline\(j_{12}(0)\) & \(\frac{1}{2\sqrt{3}}\left(t-2\sqrt{3}\right)\) & \(-\frac{1}{6}\) & \(\frac{1}{12}\left(t^2-4t\sqrt{3}+13\right)\) \\
\hline\(j_{22}(0)\) & \(\frac{1}{4}(1+t\sqrt{3})\) & \(\frac{1}{4\sqrt{3}}\left(\sqrt{3}-t\right)\) & \(\frac{1}{4}\left(t^2+1\right)\) \\
\hline\(j_{122}(0)\) & \(-\frac{1}{4\sqrt{3}}\left(t+3\sqrt{3}\right)\) &\(-\frac{1}{4\sqrt{3}}\left(t+\frac{5}{\sqrt{3}}\right)\) & \(\frac{1}{12}\left(t^2+4t\sqrt{3}+13\right)\) \\
\hline
\end{tabular}
\end{center}
We calculate
\[g(u,v)=u^2+3v^2-2au-6bv+(a^2+3b^2)-\frac14(t^2+1).\]
We only need to consider
\[
  u\in(a-\ga\sqrt{3},a+\ga\sqrt{3}),\quad
  v\in(b-\ga,b+\ga),\quad\text{where}~
  \ga=\frac{\sqrt{t^2+1}}{2\sqrt{3}}.
\]
We assume $|t|\le2\sqrt{2}-\sqrt{3}$.
\begin{itemize}
\item
$p=0$:
We consider $u,v\in(-1,1)$.
The only such~$u,v\in\z$ are $u=v=0$,
which corresponds to the excluded case $f=\Id$.
\item
$p=j_1(0)$:
We consider $u\in(-2,1)$, $v\in(-1,1)$.
The only such~$u,v\in\z$ with~$u\equiv v\mod2$
are $u=v=0$.
The function
\[
  g(u,v)=u^2+3v^2+\frac{u}{\sqrt{3}}\left(\sqrt{3}-t\right)+v\left(1+t\sqrt{3}\right)+\frac{1}{12}\left(t^2+1\right)
\]
is non-negative:
\[g(0,0)=\frac{1}{12}\left(t^2+1\right)>0.\]
\item
$p=j_2(0)$:
We consider $u\in(-1,2)$, $v\in(-1,1)$.
The only such~$u,v\in\z$ with~$u\equiv v\mod2$
are $u=v=0$.
The function
\[g(u,v)=u^2+3v^2-u+tv\sqrt{3}\]
is non-negative: $g(0,0)=0$.
\item
$p=j_{12}(0)$:
We consider $u\in(-3,1)$, $v\in(-1,1)$.
The only such~$u,v\in\z$ with~$u\equiv v\mod2$ are $(u,v)=(-2,0),(0,0)$.
The function
\[
  g(u,v)=u^2+3v^2-\frac{u}{\sqrt{3}}\left(t-2\sqrt{3}\right)+v
  -\frac16\left(t^2+2t\sqrt{3}-5\right)
\]
is non-negative:
For $|t|\le2\sqrt{2}-\sqrt{3}$,
\begin{align*}
  g(0,0)
  &=-\frac16\left(t-(2\sqrt{2}-\sqrt{3})\right)\left(t-(2\sqrt{2}+\sqrt{3})\right)\ge0,\\
  g(-2,0)
  &=-\frac16\left(t+(2\sqrt{2}-\sqrt{3})\right)\left(t-(2\sqrt{2}+\sqrt{3})\right)\ge0.
\end{align*}
Note that this discreteness proof does not work for $|t|>2\sqrt{2}-\sqrt{3}$ as then we have either $g(0,0)<0$ or $g(-2,0)<0$.
\item
$p=j_{22}(0)$:
We consider $u\in(-1,2)$, $v\in(-1,1)$.
The only such~$u,v\in\z$ with~$u\equiv v\mod2$
are $u=v=0$.
The function
\[
  g(u,v)
  =u^2+3v^2
  -\frac{u}{2}\left(1+t\sqrt{3}\right)
  -\frac{v}{2}\left(3-t\sqrt{3}\right)
\]
is non-negative: $g(0,0)=0$.
\item
$p=j_{122}(0)$:
We consider $u,v\in(-2,1)$.
The only such~$u,v\in\z$ with~$u\equiv v\mod2$ are $(u,v)=(-1,-1),(0,0)$.
The function
\[
  g(u,v)
  =u^2+3v^2
  +\frac{u}{2\sqrt{3}}\left(t+3\sqrt{3}\right)
  +\frac{v}{2}\left(t\sqrt{3}+5\right)
  -\frac16\left(t^2-2t\sqrt{3}-5\right)
\]
is non-negative:
For $|t|\le2\sqrt{2}-\sqrt{3}$,
\begin{align*}
  g(-1,-1)
  &=\left(t-(2\sqrt{2}-\sqrt{3})\right)\left(t-(2\sqrt{2}+\sqrt{3})\right)\ge0,\\
  g(0,0)
  &=\left(t+(2\sqrt{2}-\sqrt{3})\right)\left(t-(2\sqrt{2}+\sqrt{3})\right)\ge0.
\end{align*}
Note that this discreteness proof does not work for $|t|>2\sqrt{2}-\sqrt{3}$ as then we have either $g(-1,-1)<0$ or $g(0,0)<0$.
\end{itemize}
\item
Case $(n_1,n_2)=(2,6)$:
We list the values of~$a$, $b$ and~$a^2+3b^2$ for all $p=\bar{\si}(0)$ with $\si\in\Si$:
\begin{center}
\def\arraystretch{1.5}
\begin{tabular}{|c|c|c|c|c| }
\hline\(p\) & \(a\) & \(b\) & \(a^2+3b^2\) \\
\hline\(0\) & \(0\) & \(0\) & \(0\) \\
\hline\(j_1(0)\) & \(1\) & \(-\frac{t}{\sqrt{3}}\) & \(t^2+1\) \\
\hline\(j_2(0)\) & \(\frac{1}{4}\left(t\sqrt{3}-1\right)\) & \(-\frac{1}{4\sqrt{3}}\left(t+\sqrt{3}\right)\) & \(\frac{1}{4}\left(t^2+1\right)\) \\
\hline\(j_{21}(0)\) & \(\frac{1}{4}\left(1-t\sqrt{3}\right)\) & \(-\frac{1}{4}\left(t\sqrt{3}+3\right)\) & \(\frac{1}{4}\left(3t^2+4t\sqrt{3}+7\right)\) \\
\hline\(j_{22}(0)\) & \(\frac{1}{4}(t\sqrt{3}-3)\) & \(-\frac{1}{4}\left(t\sqrt{3}+1\right)\) & \(\frac{3}{4}\left(t^2+1\right)\) \\
\hline\(j_{221}(0)\) & \(-\frac{1}{4}\left(t\sqrt{3}+5\right)\) &\(-\frac{1}{4\sqrt{3}}\left(t+3\sqrt{3}\right)\) & \(\frac{1}{4}\left(t^2+4t\sqrt{3}+13\right)\) \\
\hline
\end{tabular}
\end{center}
We calculate
\[g(u,v)=u^2+3v^2-2au-6bv+(a^2+3b^2)-\frac14(t^2+1).\]
We only need to consider
\[
  u\in(a-\ga\sqrt{3},a+\ga\sqrt{3}),\quad
  v\in(b-\ga,b+\ga),~\text{where}~
  \ga=\frac{\sqrt{t^2+1}}{2\sqrt{3}}.
\]
We assume $|t|\le(4-\sqrt{5})/\sqrt{3}$.
\begin{itemize}
\item
$p=0$:
We consider $u,v\in(-1,1)$.
The only such $u,v\in\z$ are $(u,v)=(0,0)$,
which corresponds to the excluded case $f=\Id$.
\item
$p=j_1(0)$:
We consider $u\in(0,2)$, $v\in(-1,1)$.
There are no such $u,v\in\z$ with $u\equiv v\mod2$.

Note that this discreteness proof does not work for $|t|>(4-\sqrt{5})/\sqrt{3}$:
We have
\[
  g(u,v)
  =u^2+3v^2-2u+2t\sqrt{3}v+\frac34(t^2+1),
\]
and
\begin{align*}
  g(1,\pm1)
  &=\frac34\left(t\pm\frac{4-\sqrt{5}}{\sqrt{3}}\right)
  \left(t\pm\frac{4+\sqrt{5}}{\sqrt{3}}\right),
\end{align*}
hence for $|t|>(4-\sqrt{5})/\sqrt{3}$
we have either $g(1,1)<0$ or $g(1,-1)<0$.
\item
$p=j_2(0)$:
We consider
$u\in(-2,2)$, $v\in(-1,1)$.
The only such $u,v\in\z$ are $(u,v)=(0,0)$.
The function
\[g(u,v)=u^2+3v^2+\frac12(1-t\sqrt{3})u+\frac12(3-t\sqrt{3})v\]
is non-negative: $g(0,0)=0$.
\item
$p=j_{21}(0)$:
We consider $u\in(-2,2)$, $v\in(-3,1)$.
The only such $u,v\in\z$ with $u\equiv v\mod2$ are
$(u,v)=(-1,-1),(1,-1),(0,0),(0,-2)$.
The function
\[
  g(u,v)=u^2+3v^2+\frac12(t\sqrt{3}-1)u+\frac32(t\sqrt{3}+3)v+\frac12(t+\sqrt{3})^2
\]
is non-negative:
For $|t|\le(4-\sqrt{5})/\sqrt{3}$, we have
\begin{align*}
  &g(-1,-1)=\frac12(t-\sqrt{3})^2>0,\quad
  g(1,-1)=\frac12(t^2+1)>0,\\
  &g(0,0)=\frac12(t+\sqrt{3})^2>0,\quad
  g(0,-2)=\frac12(t-\sqrt{3})(t-3\sqrt{3})>0.
\end{align*}
\item
$p=j_{22}(0)$:
We consider
$u\in(-3,1)$, $v\in(-2,2)$.
The only such $u,v\in\z$ with $u\equiv v\mod2$ are
$(u,v)=(-1,-1),(-1,1),(0,0),(-2,0)$.
The function
\[
  g(u,v)
  =u^2+3v^2
  -\frac12(t\sqrt{3}-3)u+\frac{3}{2}(t\sqrt{3}+1)v
  +\frac12(t^2+1)
\]
is non-negative:
For $|t|\le(4-\sqrt{5})/\sqrt{3}$, we have
\begin{align*}
  &g(-1,-1)=\frac12(t-\sqrt{3})^2>0,\quad
  g(-1,1)=\frac12(t+\sqrt{3})(t+3\sqrt{3})>0,\\
  &g(0,0)=\frac12(t^2+1)>0,\quad
  g(0,-2)=\frac12(t+\sqrt{3})^2>0.
\end{align*}
\item
$p=j_{221}(0)$:
We consider $u\in(-3,1)$, $v\in(-2,0)$.
The only such $u,v\in\z$ with $u\equiv v\mod2$
are $(u,v)=(-1,-1)$.
The function
\[
  g(u,v)
  =u^2+3v^2
  -\frac12\left(5+t\sqrt{3}\right)u
  +\frac12\left(9+t\sqrt{3}\right)v
  +\left(3+t\sqrt{3}\right)
\]
is non-negative: $g(-1,-1)=0$.
\end{itemize}
\item 
Case $(n_1,n_2)=(3,6)$:
We list the values of~$a$, $b$ and $a^2+3b^2$ for all $p=\bar{\si}(0)$ with $\si\in\Si$:
\begin{center}
\def\arraystretch{1.5}
\begin{tabular}{|c|c|c|c|c| }
\hline\(p\) & \(a\) & \(b\) & \(a^2+3b^2\) \\
\hline\(0\) & \(0\) & \(0\) & \(0\) \\
\hline\(j_2(0)\) & \(-\frac{1}{2\sqrt{3}}\left(t+\sqrt{3}\right)\) & \(\frac{1}{2\sqrt{3}}\left(t-\frac{1}{\sqrt{3}}\right)\) & \(\frac{1}{3}\left(t^2+1\right)\) \\
\hline\(j_{22}(0)\) & \(-\frac{1}{2}\left(t\sqrt{3}+1\right)\) & \(\frac{1}{2\sqrt{3}}\left(t-\sqrt{3}\right)\) & \(t^2+1\) \\
\hline\(j_{112}(0)\) & \(-\frac{1}{2\sqrt{3}}\left(t+\sqrt{3}\right)\) & \(\frac{1}{2\sqrt{3}}\left(\frac{5}{\sqrt{3}}-t\right)\) & \(\frac{1}{3}\left(t^2-2t\sqrt{3}+7\right)\) \\
\hline\(j_{221}(0)\) & \(-\frac{1}{2}\left(t\sqrt{3}+3\right)\) & \(-\frac{1}{2\sqrt{3}}(t+\sqrt{3})\) & \(t^2+2t\sqrt{3}+3\) \\
\hline\(j_{222}(0)\) & \(-\frac{2t}{\sqrt{3}}\) & \(-\frac{2}{3}\) & \(\frac{4}{3}\left(t^2+1\right)\) \\
\hline
\end{tabular}
\end{center}
We calculate
\[g(u,v)=u^2+3v^2-2au-6bv+(a^2+3b^2)-\frac13(t^2+1).\]
We only need to consider
\[
  u\in(a-\ga\sqrt{3},a+\ga\sqrt{3}),\quad
  v\in(b-\ga,b+\ga),\quad\text{where}~
  \ga=\frac{\sqrt{t^2+1}}{3}.
\]
We assume $|t|\le1/\sqrt{3}$.
\begin{itemize}
\item
$p=0$:
We consider $u,v\in(-1,1)$.
The only such $u,v\in\z$ are $u=v=0$,
which corresponds to the excluded case $f=\Id$.
\item
$p=j_2(0)$:
We consider $u\in(-2,1)$, $v\in(-1,1)$.
The only such $u,v\in\z$ with $u\equiv v\mod2$ are $u=v=0$.
The function
\[
  g(u,v)
  =u^2+3v^2
  +\left(1+\frac{t}{\sqrt{3}}\right)u+(1-t\sqrt{3})v
\]
is non-negative: $g(0,0)=0$. 
\item
$p=j_{22}(0)$:
We consider $u,v\in(-2,1)$.
The only such $u,v\in\z$ with $u\equiv v\mod2$ are $(u,v)=(0,0),(-1,-1)$.
The function
\[
  g(u,v)
  =u^2+3v^2
  +\left(t\sqrt{3}+1\right)u-(t\sqrt{3}-3)v
  +\frac23(t^2+1)
\]
is non-negative: 
\[g(0,0)=g(-1,-1)=\frac23(t^2+1)>0.\]
\item
$p=j_{112}(0)$:
We consider $u\in(-2,1)$, $v\in(0,2)$.
The only such $u,v\in\z$ with $u\equiv v\mod2$ is $(u,v)=(-1,1)$.
The function
\[
  g(u,v)=u^2+3v^2+\left(1+\frac{t}{\sqrt{3}}\right)u-(5-t\sqrt{3})v+\left(2-\frac{2t}{\sqrt{3}}\right)
\]
is non-negative: $g(-1,1)=0$. 
\item
$p=j_{221}(0)$:
We consider $u\in(-3,0)$, $v\in(-2,1)$.
The only such $u,v\in\z$ with $u\equiv v\mod2$ are $(u,v)=(-2,0),(-1,-1)$.
The function
\[
  g(u,v)
  =u^2+3v^2
  +\left(t\sqrt{3}+3\right)(u+v)
  +\frac23(t^2+3t\sqrt{3}+4)
\]
is non-negative: 
\[g(-2,0)=g(-1,-1)=\frac23(t^2+1)>0.\]
\item
$p=j_{222}(0)$:
We consider
$u\in(-2,2)$, $v\in(-2,0)$.
The only such $u,v\in\z$ with $u\equiv v\mod2$ are $(u,v)=(-1,-1),(1,-1)$.
The function
\[
  g(u,v)
  =u^2+3v^2+\frac{4t}{\sqrt{3}}u+4v+(t^2+1)
\]
is non-negative:
For $|t|\le1/\sqrt{3}$,
\begin{align*}
 g(\pm1,-1)
 &=\left(t\pm\sqrt{3}\right)\left(t\pm\frac{1}{\sqrt{3}}\right)\ge0.
\end{align*}

Note that this discreteness proof does not work for $|t|>1/\sqrt{3}$
as then we have either $g(1,-1)<0$ or $g(-1,-1)<0$.
\end{itemize}
\item
Case $(n_1,n_2)=(2,4)$:
We list the values of~$a$, $b$ and~$a^2+b^2$ for all $p=\bar{\si}(0)$ with $\si\in\Si$:
\begin{center}
\def\arraystretch{1.5}
\begin{tabular}{|c|c|c|c|c| }
\hline\(p\) & \(a\) & \(b\) & \(a^2+b^2\) \\
\hline\(0\) & \(0\) & \(0\) & \(0\) \\
\hline\(j_1(0)\) & \(\frac{1}{2}\left(t-1\right)\) & \(-\frac{1}{2}\left(t+1\right)\) & \(\frac{1}{2}\left(t^2+1\right)\) \\
\hline\(j_2(0)\) & \(\frac{1}{2}\) & \(-\frac{t}{2}\) & \(\frac{1}{4}\left(t^2+1\right)\) \\
\hline\(j_{21}(0)\) & \(\frac{1}{2}\left(t+2\right)\) & \(-\frac{1}{2}\) & \(\frac{1}{4}\left(t^2+4t+5\right)\) \\
\hline
\end{tabular}
\end{center}
We calculate
\[g(u,v)=u^2+v^2-2au-2bv+(a^2+b^2)-\frac14(t^2+1).\]
We only need to consider
$u\in(a-\ga,a+\ga)$,
$v\in(b-\ga,b+\ga)$,
where $\ga=\frac12\sqrt{t^2+1}$.
We assume~$|t|\le1$.
\begin{itemize}
\item
$p=0$:
We consider $u,v\in(-1,1)$.
The only such~$u,v\in\z$ with~$u\equiv v\mod2$
are $u=v=0$,
which corresponds to the excluded case $f=\Id$.
\item
$p=j_1(0)$:
We consider $u,v\in(-2,1)$.
The only such~$u,v\in\z$ with~$u\equiv v\mod2$
are $u=v=-1$ and $u=v=0$.
The function
\[g(u,v)=u^2+v^2+\left(1-t\right)u+\left(1+t\right)v+\frac14(t^2+1)\]
is non-negative:
\[g(-1,-1)=g(0,0)=\frac14(t^2+1)>0.\]
\item
$p=j_2(0)$:
We consider $u\in(-1,2)$, $v\in(-2,2)$.
The only such~$u,v\in\z$ with~$u\equiv v\mod2$
are $(u,v)=(0,0),(1,1),(1,-1)$.
The function
\[g(u,v)=u^2+v^2-u+tv\]
is non-negative:
$g(0,0)=0$, $g(1,\pm 1)=1\pm t\ge0$.

Note that this discreteness proof does not work for $|t|>1$ as then we have either $g(1,1)<0$ or $g(1,-1)<0$.
\item
$p=j_{21}(0)$,
We consider $u\in(-1,3)$, $v\in(-2,1)$.
The only such~$u,v\in\z$ with~$u\equiv v\mod2$
are $(u,v)=(0,0),(1,-1),(2,0)$.
The function
\[g(u,v)=u^2+v^2-(t+2)u+v+(t+1)\]
is non-negative:
$g(1,-1)=0$,
$g(0,0)=1+t\ge0$,
$g(2,0)=1-t\ge0$.

Note that this discreteness proof does not work for $|t|>1$ as then we have either $g(0,0)<0$ or $g(2,0)<0$.
\end{itemize}

\item
Case $(n_1,n_2)=(4,4)$:
We list the values of~$a$, $b$ and~$a^2+b^2$ for all $p=\bar{\si}(0)$ with $\si\in\Si$:
\begin{center}
\def\arraystretch{1.5}
\begin{tabular}{|c|c|c|c|c| }
\hline$p$ & $a$ & $b$ & $a^2+b^2$ \\
\hline$0$ & $0$ & $0$ & $0$ \\
\hline\(j_1(0)\) & \(-\frac{1}{2}\left(t+1\right)\) & \(\frac{1}{2}\left(1-t\right)\) & \(\frac{1}{2}\left(t^2+1\right)\) \\
\hline\(j_{11}(0)\) & \(-1\) & \(-t\) & \(t^2+1\) \\
\hline\(j_{111}(0)\) & \(\frac{1}{2}\left(t-1\right)\) & \(-\frac{1}{2}\left(t+1\right)\) & \(\frac{1}{2}\left(t^2+1\right)\) \\
\hline
\end{tabular}
\end{center}
We calculate
\[g(u,v)=u^2+v^2-2au-2bv+(a^2+b^2)-\frac12(t^2+1).\]
We only need to consider
$u\in(a-\ga,a+\ga)$,
$v\in(b-\ga,b+\ga)$,
where $\ga=\sqrt{\frac{t^2+1}{2}}$.
We assume $|t|\le2-\sqrt{3}$.
\begin{itemize}
\item
$p=0$:
We consider $u,v\in(-1,1)$.
The only such $u,v\in\z$ with $u\equiv v\mod2$ are $u=v=0$,
which corresponds to the excluded case $f=\Id$.
\item
$p=j_1(0)$:
We consider $u\in(-2,1)$, $v\in(-1,2)$.
The only such $u,v\in\z$ with $u\equiv v\mod2$
are $u=-1$, $v=1$ and $u=v=0$.
The function
\[g(u,v)=u^2+v^2+\left(t+1\right)u+\left(t-1\right)v\]
is non-negative: $g(-1,1)=g(0,0)=0$.
\item
$p=j_{11}(0)$:
We consider $u\in(-2,0)$, $v\in(-2,2)$.
The only such $u,v\in\z$ with $u\equiv v\mod2$ are $(u,v)=(-1,-1),(-1,1)$.
The function
\[g(u,v)=u^2+v^2+2u+2tv+\frac12(t^2+1)\]
is non-negative:
For $|t|\le2-\sqrt{3}$,
\begin{align*}
  g(-1,\pm1)
  &=\frac12\left(t\mp(2-\sqrt{3})\right)\left(t\pm(2+\sqrt{3})\right)
  \ge0.
\end{align*}

Note that this discreteness proof does not work for $|t|>2-\sqrt{3}$ as then we have either $g(-1,1)<0$ or $g(-1,-1)<0$.
\item
$p=j_{111}(0)$:
We consider $u,v\in(-2,1)$.
The only such $u,v\in\z$ with $u\equiv v\mod2$
are $(u,v)=(-1,1),(0,0)$.
The function
\[g(u,v)=u^2+v^2-\left(t-1\right)u+(t+1)v\]
is non-negative:
$g(-1,-1)=g(0,0)=0$.
\end{itemize}
\end{enumerate}
Hence in all cases all conditions of Lemma~\ref{f(0)} are satisfied and we can conclude that the group $\<\iota_1,\iota_2,\iota_3\>$ is discrete.
\end{proof}

\section{Non-Discreteness Results}
\label{nondisres}

\noindent
Let $\Ga=<\iota_1,\iota_2,\iota_3\>$
be an ultra-parallel $[m_1, m_2,0; n_1, n_2, 2]$-triangle group as in section~\ref{sec-param}.
In this section we will use the following theorems from~\cite{parkershimizu} (Proposition~8.2) and~\cite{parkerford} (Section~3.1) to find conditions for the group~$\Ga$ not to be discrete:

\begin{theorem}
\label{parkford}
Let $\Ga$ be a discrete subgroup of \(\PU(n,1)\) containing a vertical translation~$g$ by~$\nu>0$. Let $h$ be any element of~$\Gamma$ not fixing $q_\infty$, a distinguished point at infinity, and let 
\[r_h=\sqrt{\frac{2}{\abs{h_{22}-h_{23}+h_{32}-h_{33}}}}\]
be the radius of its isometric sphere. Then either 
\[\nu/r_h^2\ge2\quad\mbox{or}\quad \nu/r_h^2=2\cos(\pi/q) \quad\mbox{for some integer}\quad q\ge3.\]
\end{theorem}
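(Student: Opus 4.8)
The plan is to prove this as a Shimizu--Jørgensen type inequality for the parabolic $g$, exploiting that a vertical translation lies in the centre of the stabiliser of $q_\infty$. First I would fix horospherical coordinates in which $q_\infty=[0:1:-1]$, so that $g$ is the explicit central vertical-translation matrix of section~\ref{heisenberg-isometries} with $(\xi,\nu)=(0,t)$. In these coordinates $\langle v,q_\infty\rangle=v_2+v_3$, whence $\langle h(q_\infty),q_\infty\rangle=h_{22}-h_{23}+h_{32}-h_{33}$ and the hypothesis reads $r_h^2=2/|\langle h(q_\infty),q_\infty\rangle|$. The basic structural observation is that for \emph{any} $f\in\Gamma$ the conjugate $fgf^{-1}$ is conjugate to $g$, hence again a vertical translation by $t$, now fixing $f(q_\infty)$. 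In particular all the elements produced below remain vertical translations (or controlled products of them) and stay inside $\Gamma$.

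Next I would run the Shimizu iteration $h_0=h$ and $h_{k+1}=h_kgh_k^{-1}$, each of which lies in $\Gamma$ and, inductively, fails to fix $q_\infty$ as long as the isometric-sphere radii do not degenerate. The computation to carry out is the transformation law for the datum $\delta_k:=\langle h_k(q_\infty),q_\infty\rangle$ along the iteration: this is the complex hyperbolic analogue of the classical real recursion $c_{k+1}=-t\,c_k^{2}$ underlying Shimizu's lemma in $\mathrm{SL}(2,\mathbb{R})$, except that here $\delta_{k+1}$ is a quadratic function of $\delta_k$ carrying an extra unimodular phase coming from the Heisenberg rotation part. Because a vertical translation scales quadratically relative to the horizontal (Cygan) scale, the correct dimensionless invariant is $\rho_k:=t/r_{h_k}^2=\tfrac{1}{2}\,t\,|\delta_k|$ rather than $t/r_{h_k}$, and the modulus part of the recursion has critical value $\rho=2$. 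The whole behaviour is therefore governed by $\rho:=\rho_0=t/r_h^2$ together with the accumulated phase.

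With the recursion in hand the dichotomy is a dynamical statement. If $\rho\geq2$ there is nothing to prove. If $\rho<2$, the radii do not shrink to zero, so the $h_k$ cannot converge to the identity; discreteness of $\Gamma$ then forces $(h_k)$ to be eventually periodic, for otherwise it supplies infinitely many distinct elements accumulating onto a vertical translation, contradicting discreteness. The periodic case is exactly where the phase locks onto a root of unity: a suitable product of $g$ with its conjugate $hgh^{-1}$ lies in $\Gamma$ and is elliptic precisely when $t/r_h^2<2$, with rotation angle $\beta$ determined by $2\cos\beta=t/r_h^2$. Analysing this element as a finite-order rotation, and reading the admissible orders through the half-turn structure carried by the vertical direction, yields exactly $\beta=\pi/q$ for an integer $q\geq3$, i.e.\ $t/r_h^2=2\cos(\pi/q)$, which is the second alternative.

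The main obstacle is the exact bookkeeping of the phase. One must show that a value $\rho\in(0,2)$ \emph{not} of the form $2\cos(\pi/q)$ genuinely produces a non-discrete subgroup (either an infinite-order elliptic or a non-periodic family accumulating at $g$), and that the admissible periodic values are pinned to the half-angle $\pi/q$ rather than $2\pi/q$. This is precisely where the complex, as opposed to real, hyperbolic setting is essential: it is the unimodular phase in the recursion --- absent in $\mathrm{SL}(2,\mathbb{R})$ --- that both creates the exceptional discrete configurations and fixes their exact values. A cleaner route for the periodic analysis, which I would cross-check against the iteration, is to apply the trace/eigenvalue criterion directly to the single element $g\,h g^{-1}h^{-1}\in\Gamma$, whose rotational invariant also encodes $\rho$ via $2\cos\beta=t/r_h^2$.
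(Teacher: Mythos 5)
First, a point of comparison: the paper offers no proof of Theorem \ref{parkford}. It is quoted from Parker \cite{parkerford} and used as a black box, so your attempt can only be measured against the external source, not against anything in this text. Your normalisations are right: with $q_\infty=[0:1:-1]$ one has $\langle h(q_\infty),q_\infty\rangle=h_{22}-h_{23}+h_{32}-h_{33}$, so $t/r_h^2=\tfrac12 t\,|\langle h(q_\infty),q_\infty\rangle|$, and the observation that $f=hgh^{-1}$ is again a vertical translation by $t$ fixing $h(q_\infty)$ is the correct starting point.

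Beyond that, the proposal has concrete gaps and one outright error. (i) The recursion for $\delta_k$ is never derived, and the assertion that ``the modulus part of the recursion has critical value $\rho=2$'' is precisely the content of the theorem, not something one may posit. (ii) The sub-threshold logic is inverted: in a Shimizu argument the contradiction comes from the $h_k$ being distinct yet converging to $g$; if, as you say, the radii do not shrink, then discreteness gives you nothing until you have shown the $h_k$ stay in a compact set, which you have not. (iii) The element you single out, $ghg^{-1}h^{-1}=gf^{-1}$, is never elliptic: both $g$ and $f$ preserve the complex geodesic $L$ spanned by $q_\infty$ and $h(q_\infty)$, and on $L$ (normalising $h(q_\infty)$ to the origin, so that $r_h=1$) they restrict to parabolics of $\SU(1,1)$ with $\tr(gf^{-1})=2+t^2>2$, i.e.\ $gf^{-1}$ is loxodromic for all $t\neq0$. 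The element that detects the threshold is $gf=g\,hgh^{-1}$, with $\tr(gf)=2-(t/r_h^2)^2$ on $L$, elliptic exactly when $t/r_h^2<2$. (iv) Even with the right element, requiring it merely to have finite order yields $t/r_h^2=2\sin(\pi k/q')$, a strictly larger set of values than $\{2\cos(\pi/q)\}$ --- for instance $2\sin(\pi/5)$ is not of the form $2\cos(\pi/q)$ for any integer $q$. Pinning the exceptional values to exactly $2\cos(\pi/q)$ requires the classification of discrete subgroups of $\PSL(2,\r)$ generated by two parabolic elements with distinct fixed points (a Knapp-type theorem: the relevant parameter $(t/r_h^2)^2$ must be at least $4$ or equal to $4\cos^2(\pi/q)$), applied to the restriction of $\langle g,\,hgh^{-1}\rangle$ to $L$. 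That reduction and classification is the heart of Parker's proof, and it is exactly the piece your sketch defers.
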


\begin{theorem}
\label{parkshim}
Let \(\Gamma\) be a discrete subgroup of \(\PU(n,1)\) containing a Heisenberg translation by \((\xi, t)\) with \(\xi\not=0\) and a vertical translation by \(\nu>0\). Let \(h\) be any element of \(\Gamma\) not fixing \(q_\infty\) and let \(r_h\) be the radius of its isometric sphere. Then 
\[r_h^2 \leq \mbox{min}\left\{\nu, \abs{|\xi|^2+i\nu/2}\right\}.\]
\end{theorem}

\noindent
For the ultra-parallel \([m_1,m_2,0; n_1, n_2, 2]\)-triangle groups, we will apply Theorem \ref{parkford} to the vertical Heisenberg translation \(H = (0, \nu)\). We will then strengthen the results for the four ultra-parallel \([m_1,m_2,0; n_1, n_2, 2]\)-triangle groups, with $(n_1,n_2)=(3,3),(2,3),(2,6),(4,4)$, applying Theorem~\ref{parkshim} to a Heisenberg translations $(\xi, t)$ with $\xi\ne0$ and the vertical Heisenberg translation $H=(0,\nu)$. 
\\
\linebreak
Let $h=\iota_3$.
The matrix of the element \(h=\iota_3=\iota_3^{-1}\) is 
\[\begin{pmatrix} -1 & 0 & 0 \\ 0 & 1 & 0 \\ 0 & 0 & -1 \end{pmatrix}.\]
The radius of the isometric sphere of \(h\) is \(r_h = 1\). To see if the element \(h\) fixes \(q_\infty\), we first map \(\infty\) from the Heisenberg space to the boundary of the complex hyperbolic 2-space. That is,
\[\infty \mapsto [0:1:-1] \in \partial H_{\mathbb{C}}^2.\]
We apply \(h\) to this point
\[\begin{pmatrix} -1 & 0 & 0 \\ 0 & 1 & 0 \\ 0 & 0 & -1 \end{pmatrix}\begin{bmatrix} 0 \\ 1 \\ -1 \end{bmatrix} = \begin{bmatrix} 0 \\ 1 \\ 1 \end{bmatrix} =  [0:1:1] \in \partial H_{\mathbb{C}}^2.\]
Note that $h(\infty)\ne\infty$ and so $h=\iota_3$ does not fix~$q_\infty$.
Therefore, for Theorem \ref{parkford}, if the group is discrete then
\[\nu\ge2\quad\mbox{or}\quad \nu=2\cos(\pi/q) \quad\mbox{for some integer}\quad q\ge3.\]
Hence the group is not discrete if 
\begin{equation}
\label{nondis}
  \nu<2
  \quad\mbox{and}\quad
  \nu\ne2\cos(\pi/q)
  \quad\mbox{for some integer}
  \quad q\ge3.
\end{equation}

\subsection{Proof of Proposition~\ref{prop-nondiscr}}
As we are considering the ultra-parallel \([m_1, m_2, 0; n_1, n_2, 2]\)-triangle group, with \(m_1\ne m_2\), we have that \(R = \sqrt{r_1^2+r_2^2+2r_1r_2\cos(2\theta)}\). 
\begin{proof}
~~~
\begin{enumerate}[$\bullet$]
\item
$(n_1,n_2)=(2,4),(3,6)$:
For the vertical Heisenberg translation~$H$ we have $\nu=\nu_1R^2$,
where $\nu_1$ is determined by each triangle group:
\begin{align*}
&\nu_1=16\quad\text{for}~(n_1,n_2)=(2,4),\\
&\nu_1=2\sqrt{3}\quad\text{for}~(n_1,n_2)=(3,6).
\end{align*}
Substituting \(\nu=\nu_1R^2\) into (\ref{nondis}), we are able to conclude by Theorem \ref{parkford} that the group \(\Gamma\) is not discrete if
\begin{align*}
&\nu_1R^2<2
~\text{and}~
\nu_1R^2\ne2\cos(\pi/q)
~\text{for some integer}~q\ge3 \\
\Longleftrightarrow
&\cos^2(\theta)<\frac{2-\nu_1(r_1-r_2)^2}{4\nu_1r_1r_2}
~\mbox{and}~
\cos^2(\theta)\ne\frac{2\cos(\pi/q)-\nu_1(r_1-r_2)^2}{4\nu_1r_1r_2} 
\end{align*}
for some integer \(q\ge3\). Using 
\begin{equation}
\label{cos}
\cos^2(\theta) = \frac{1}{2}\left(\cos(2\theta)+1\right) = \frac{1}{2}\left(1-\cos(\alpha)\right)
\end{equation}
we are able to rewrite the inequalities to conclude that the group \(\Gamma\) is not discrete provided that
\[\cos(\alpha) > 1-\frac{2-\nu_1(r_1-r_2)^2}{2\nu_1r_1r_2} \quad\mbox{and}\quad \cos(\alpha) \not= 1- \frac{2\cos(\pi/q)-\nu_1(r_1-r_2)^2}{2\nu_1r_1r_2}\]
for some integer $q\ge3$.
Substituting the values of~$\nu_1$ will give the required limits for~$\cos(\alpha)$.
\item
$(n_1,n_2)=(3,3), (2,3),(2,6),(4,4)$:
For the vertical Heisenberg translation~$H$ we have \(\nu=\nu_1R^2\), where $\nu_1$ is determined by each triangle group:
\begin{align*}
&[m,m,0; 3,3,2], \quad \nu_1=6\sqrt{3} \\
&[m,m,0; 2,3,2], \quad \nu_1=24\sqrt{3} \\
&[m,m,0; 2,6,2], \quad \nu_1=4\sqrt{3} \\
&[m,m,0; 4,4,2], \quad \nu_1=4.
\end{align*}
Substituting \(\nu=\nu_1R^2\) into (\ref{nondis}), we are able to conclude by Theorem \ref{parkford} that the group \(\Gamma\) is not discrete provided that \[\cos(\alpha) > 1-\frac{2-\nu_1(r_1-r_2)^2}{2\nu_1r_1r_2} \quad\mbox{and}\quad \cos(\alpha) \not= 1- \frac{2\cos(\pi/q)-\nu_1(r_2-r_2)^2}{2\nu_1r_1r_2}\]
for some integer $q\ge3$.
\\
\linebreak
We will now use Theorem~\ref{parkshim} to further these non-discreteness results by eliminating some of the values of~$q$.
For the Heisenberg translation~$T_1$ by~$(\xi, t)$ with~$\xi\ne0$ and the vertical translation~$H$ by $\nu>0$, we can improve our results if we have 
\[\frac{\nu}{2}\leq \left||\xi|^2+\frac{i\nu}{2}\right|<\nu.\]
\\
\linebreak
For each of the triangle groups \([m_1,m_2,0; n_1,n_2,2]\) we have:
\begin{center}
\def\arraystretch{1.5}
\begin{tabular}{ c c c c }
\(\{n_1, n_2\}\) & \(\xi\) & \(\nu\) & \(\mbox{min}\left\{\nu,\abs{|\xi|^2+i\nu/2}\right\}\) \\
\hline
\(\{3, 3\}\) & \(i\cdot R\sqrt{3}\) & \(6\sqrt{3}R^2\) & \(6R^2\) \\ 
\(\{2, 3\}\) & \(i\cdot2R\sqrt{3}\) & \(24\sqrt{3}R^2\) & \(24R^2\) \\  
  \(\lbrace{2, 6\rbrace}\) & \(R\cdot(1+i\sqrt{3})\) & \(4\sqrt{3}R^2\) & \(2\sqrt{7}R^2\) \\ 
  \(\lbrace{4, 4\rbrace}\) & \(R\cdot(1+i)\) & \(4R^2\) & \(2\sqrt{2}R^2\) \\
\end{tabular}
\end{center}
Therefore, the triangle group is non-discrete when \[\nu_2R^2<1 \Leftrightarrow \cos^2(\theta)<\frac{1-\nu_2(r_1-r_2)^2}{4\nu_2r_1r_2}\]
for the cases
\begin{align*}
&[m,m,0; 3,3,2], \quad \nu_2=6 \\
&[m,m,0; 2,3,2], \quad \nu_2=24 \\
&[m,m,0; 2,6,2], \quad \nu_2=2\sqrt{7} \\
&[m,m,0; 4,4,2], \quad \nu_2=2\sqrt{2}
\end{align*}
Combining these results with those from Theorem \ref{parkford}, we can then eliminate the values \(q=3,4\) and \(5\) for the \([m_1,m_2,0; 3,3,2]\) and \([m_1,m_2,0; 2,3,2]\) cases. For the \([m_1,m_2,0; 2,6,2]\) and \([m_1,m_2,0; 4,4,2]\) cases we can eliminate the value \(q=3\).
\end{enumerate}
\end{proof}

\section{Acknowledgements} 
\noindent
We would like to thank John Parker for his insightful comments and suggestions throughout this work.


\appendix

\section{Classification of Almost-Crystallographic Groups}
\label{Dekimpe}

In this section we will outline an alternative approach to the understanding of the structure of the subgroup~$E=\<\iota_1,\iota_2\>$ using the classification of almost-crystallographic groups by Dekimpe~\cite{dekimpe}.

\begin{definition}
An \textit{almost-crystallographic group} is a uniform discrete subgroup~$\calE$ of $G\semiprod C$,
where $G$ is a connected, simply connected nilpotent Lie group and $C$ is a maximal compact subgroup of~$\Aut(G)$. 
\end{definition}

\noindent
As a discrete subgroup of $\mathpzc{N}\semiprod\U(1)$ (see section \ref{heisenberg-isometries}),
the group~$\calE$ is an almost-crystallographic group with $G=\calN$ and $\U(1)\subset C\subset\Aut(\calN)$.
The projection of~$\calE=\<\iota_1,\iota_2\>$ to~$\c$ is a wallpaper group $Q=\<j_1,j_2\>$,
where $j_k$ is the rotation of $\c$ around $\varphi_k$ through $2\pi/{n_k}$, for~$k=1,2$, obtained by projecting~$\iota_k$ to~$\c$.
\\
\linebreak
\textbf{The case $[m,m,0;2,3,2]$}.
The wallpaper group $Q=\<j_1, j_2\>$ is generated by two rotations of orders~$2$ and~$3$ respectively, and has a presentation
\[
  \<j_1,j_2\:|\: j_1^2=j_2^3=\left(j_{12}\right)^6=1\>.
\]
The standard notation for this wallpaper group is \textbf{p6}; see for example~\cite{bb}.
In the classification of three-dimensional almost-crystallographic groups in section \(7.1\) of \cite{dekimpe}, the wallpaper group \textbf{p6} appears in case \(16\) on page \(166\). In this case the group~$\calE$ is generated by elements $a,b,c,\alpha$ with relations
\[
  [b,a]=c^{k_1}, 
  [c,a]=[c,b]=[c,\alpha]=1, \:
  \alpha a=ab\alpha c^{k_2}, \:
  \alpha b=a^{-1}\alpha c^{k_3}, \:
  \alpha^6=c^{k_4}.
\]
We consider the generators $\iota_1=\alpha^3 a$ and $\iota_2=\alpha^2$,
so that $\left(\alpha^3 a\right)^2=\alpha^6=1$.
The hypothesis $\alpha^6=1$ implies that $k_4=0$.
The hypothesis $\left(\alpha^3 a\right)^2=1$ can be rewritten as 
\[
  c^{-k_1+2k_2+2k_3}=1
  \Rightarrow k_1=2\left(k_2+k_3\right).
\]
The translations~$T_1$ and~$T_2$ in Proposition~\ref{structure-E} are 
\begin{align*}
T_1=\iota_{21212} &= \alpha^2\left(\alpha^3a\right)\alpha^2\left(\alpha^3a\right)\alpha^2 = b^{-2}a^{-1}c^{-k_2+2k_3}; \\
T_2=\iota_{12212} &= \alpha^3a\alpha^2\alpha^2\left(\alpha^3a\right)\alpha^2 = a^{-1}b^{-1}a^{-1}c^{-k_2}.
\end{align*}
Their commutator is
\begin{align*}
  &H=[T_1, T_2]\\
  &=\left(b^{-2}a^{-1}c^{-k_2+2k_3}\right)^{-1}\cdot \left(a^{-1}b^{-1}a^{-1}c^{-k_2}\right)^{-1}\cdot b^{-2}a^{-1}c^{-k_2+2k_3}\cdot a^{-1}b^{-1}a^{-1}c^{-k_2} \\
  &= ab^2abab^{-1}a^{-2}b^{-1}a^{-1} = c^{3k_1}.
\end{align*}
On the other hand, the kernel of the map $E=\<\iota_1,\iota_2\>\rightarrow\<j_1,j_2\>$ given by $\iota_1\mapsto j_1$, $\iota_2\mapsto j_2$ is generated by $\left(\iota_{12}\right)^6$. Calculating $\left(\iota_{12}\right)^6$
we obtain $c^{6\left(k_2+k_3\right)}$.
Using $k_1=2\left(k_2+k_3\right)$ we can rewrite this as $\left(\iota_{12}\right)^6=c^{3k_1}$. Hence the element
$
 H=[T_1,T_2]
 =\left(\iota_{12}\right)^6
 =c^{3k_1}
$
is the shortest vertical Heisenberg translation in $E=\<\iota_1,\iota_2\>$.
\\
\linebreak
\textbf{The case \([m,m,0;2,4,2]\)}.
The wallpaper group $Q=\<j_1,j_2\>$ is generated by two rotations of orders~$2$ and~$4$ respectively,
and has a presentation
\[
  \<j_1,j_2 \:|\: j_1^2=j_2^4=\left(j_{12}\right)^4=1\>.
\]
The standard notation for this wallpaper group is~\textbf{p4}.
In the classification of three-dimensional almost-crystallographic groups in section~7.1 of~\cite{dekimpe}, the wallpaper group~\textbf{p4} appears in case~10 on page~163.
In this case the group~$\calE$ is generated by elements $a,b,c,\alpha$ with relations
\[[b,a]=c^{k_1}, [c,a]=[c,b]=[c,\alpha]=1, \:\alpha a=b\alpha c^{k_2}, \:\alpha b=a^{-1}\alpha c^{k_3}, \:\alpha^4=c^{k_4}.\]
We consider the generators $\iota_1=\alpha^2 a$ and $\iota_2=\alpha$, so that $\left(\alpha^2 a\right)^2=\alpha^4=1$.
The hypothesis $\alpha^4=1$ implies that $k_4=0$.
The hypothesis $\left(\alpha^2 a\right)^2=1$ can be rewritten as 
\[c^{k_2+k_3}=1\Rightarrow k_2+k_3=0.\]
The translations~$T_1$ and~$T_2$
in Proposition~\ref{structure-E} are 
\begin{align*}
T_1=\iota_{212}
&=\alpha\left(\alpha^2a\right)\alpha=b^{-1}c^{k_3};\\
T_2=\iota_{122}
&=\alpha^2a\alpha^2 = a^{-1}.
\end{align*}
Their commutator is
\begin{align*}
H
=[T_1, T_2]
&=\left(b^{-1}c^{k_3}\right)^{-1}\cdot \left(a^{-1}\right)^{-1}\cdot b^{-1}c^{k_3}\cdot a^{-1} \\
&=bab^{-1}a^{-1}=abc^{k_1}b^{-1}a^{-1} = c^{k_1}.
\end{align*}
On the other hand, the kernel of the map $\calE=\<\iota_1,\iota_2\>\rightarrow\<j_1, j_2\>$
given by $\iota_1\mapsto j_1,\iota_2\mapsto j_2$ is generated by $\left(\iota_{12}\right)^4$.
Calculating $\left(\iota_{12}\right)^4$ we obtain $c^{k_1}$.
Hence the element
$H=[T_1,T_2]=\left(\iota_{12}\right)^4=c^{k_1}$
is the shortest vertical Heisenberg translation in $\calE=\<\iota_1,\iota_2\>$.
\\
\linebreak
\textbf{The case \([m,m,0; 4,4,2]\)}.
The wallpaper group $Q=\<j_1, j_2\>$ is generated by two rotations of order~$4$, and has a presentation
\[\<j_1, j_2 \:|\: j_1^4=j_2^4=\left(j_{12}\right)^2=1\>.\]
The standard notation for this wallpaper group is~\textbf{p4} (for more details, see \\
\([m,m,0; 2,4,2]\)-case above). We consider the generators $\iota_1=\alpha a$ and $\iota_2=\alpha$,
so that $\left(\alpha a\right)^4=\alpha^4=1$.
The hypothesis $\alpha^4=1$ implies that $k_4=0$.
The hypothesis $\left(\alpha a\right)^4=1$ can be rewritten as 
\[
  c^{-k_1+2k_2+2k_3}=1
  \Rightarrow
  k_1=2\left(k_2+k_3\right).
\]
The translations~$T_1$ and~$T_2$ in Proposition~\ref{structure-E} are 
\begin{align*}
T_1=\iota_{1112}
&=\alpha a\alpha a\alpha a\alpha
=ba^{-1}b^{-1}c^{k_1}; \\
T_2=\iota_{2111}
&=\alpha^2a\alpha a\alpha a
=a^{-1}b^{-1}ac^{k_1-k_2}.
\end{align*}
Their commutator is
\begin{align*}
H^2&=[T_2, T_1]\\
&=\left(a^{-1}b^{-1}ac^{k_1-k_2}\right)^{-1}\cdot \left(ba^{-1}b^{-1}c^{k_1}\right)^{-1}\cdot a^{-1}b^{-1}ac^{k_1-k_2}\cdot ba^{-1}b^{-1}c^{k_1} \\
&=a^{-1}babab^{-1}a^{-1}b^{-1}aba^{-1}b^{-1} = c^{k_1}.
\end{align*}
On the other hand, the kernel of the map \(E=\langle{\iota_1, \iota_2\rangle} \rightarrow \langle{j_1, j_2\rangle}\) given by \(\iota_1 \rightarrow j_1, \iota_2 \rightarrow j_2\) is generated by \(\left(\iota_{12}\right)^2\). Calculating \(\left(\iota_{12}\right)^2\) we obtain \(c^{k_2+k_3}\). Using \(k_1=2\left(k_2+k_3\right)\) we can rewrite this as \(\left(\iota_{12}\right)^2 = c^{\frac{k_1}{2}}\). Hence the element \(H = \left(\iota_{12}\right)^2 = c^{\frac{k_1}{2}}\) is the shortest vertical Heisenberg translation in \(E=\langle{\iota_1, \iota_2\rangle}\).
\\
\linebreak
\textbf{The case \([m,m,0; 2,6,2]\)}. The wallpaper group \(Q=\langle{j_1, j_2\rangle}\) is generated by two rotations of orders \(2\) and \(6\) respectively, and has a presentation
\[\langle{j_1, j_2 \:|\: j_1^2=j_2^6=\left(j_{12}\right)^3=1\rangle}.\]
The standard notation for this wallpaper group is \textbf{p6}; (for more details, see \\
\([m,m,0; 2,3,2]\)-case above). We consider the generators \(\iota_1=\alpha^3 a\) and \(\iota_2=\alpha\), so that \(\left(\alpha^3 a\right)^2=\alpha^6=1.\) The hypothesis \(\alpha^6=1\) implies that \(k_4=0\). The hypothesis \(\left(\alpha^3 a\right)^2=1\) can be rewritten as 
\[c^{-k_1+2k_2+2k_3}=1 \Rightarrow k_1=2\left(k_2+k_3\right).\]
The translations~$T_1$ and~$T_2$ in Proposition~\ref{structure-E} are 
\begin{align*}
T_1=\iota_{2122} &= \alpha\left(\alpha^3a\right)\alpha^2 = b^{-1}a^{-1}c^{-k_2}; \\
T_2=\iota_{2212} &= \alpha^2\left(\alpha^3a\right)\alpha = b^{-1}c^{k_3}.
\end{align*}
Their commutator is \(H^2=[T_2, T_1]\)
\begin{align*}
&= \left(b^{-1}c^{k_3}\right)^{-1}\cdot \left(b^{-1}a^{-1}c^{-k_2}\right)^{-1}\cdot b^{-1}c^{k_3} \cdot b^{-1}a^{-1}c^{-k_2} \\
&= bab^{-1}a^{-1} = c^{k_1}.
\end{align*}
On the other hand, the kernel of the map \(E=\langle{\iota_1, \iota_2\rangle} \rightarrow \langle{j_1, j_2\rangle}\) given by \(\iota_1 \rightarrow j_1, \iota_2 \rightarrow j_2\) is generated by \(\left(\iota_{12}\right)^3\). Calculating \(\left(\iota_{12}\right)^3\) we obtain \(c^{k_2+k_3}\). Using \(k_1=2\left(k_2+k_3\right)\) we can rewrite this as \(\left(\iota_{12}\right)^3 = c^{\frac{k_1}{2}}\). Hence the element \(H= \left(\iota_{12}\right)^3 = c^{\frac{k_1}{2}}\) is the shortest vertical Heisenberg translation in \(E=\langle{\iota_1, \iota_2\rangle}\).
\\
\linebreak
\textbf{The case \([m,m,0; 3,6,2]\)}. The wallpaper group \(Q=\langle{j_1, j_2\rangle}\) is generated by two rotations of orders \(3\) and \(6\) respectively, and has a presentation
\[\langle{j_1, j_2 \:|\: j_1^3=j_2^6=\left(j_{12}\right)^2=1\rangle}.\]
The standard notation for this wallpaper group is \textbf{p6}; (for more details, see 
\\
\([m,m,0; 2,3,2]\)-case above). We consider the generators \(\iota_1=\alpha^2 a\) and \(\iota_2=\alpha\), so that \(\left(\alpha^2 a\right)^3=\alpha^6=1.\) The hypothesis \(\alpha^6=1\) implies that \(k_4=0\). The hypothesis \(\left(\alpha^2 a\right)^3=1\) can be rewritten as 
\[c^{-2k_1+3k_2+3k_3}=1 \Rightarrow 2k_1=3\left(k_2+k_3\right).\]
The translations \(T_1\) and \(T_2\) in Proposition \ref{structure-E} are 
\begin{align*}
T_1=\iota_{1122} &= \alpha^2 a\alpha^2a\alpha^2 = a^{-1}; \\
T_2=\iota_{2211} &= \alpha^2\left(\alpha^2a\right)\alpha^2a = b^{-1}c^{-k_1+k_2+2k_3}.
\end{align*}
Their commutator is \(H^3=[T_2, T_1]\)
\begin{align*}
&= \left(b^{-1}c^{-k_1+k_2+2k_3}\right)^{-1}\cdot \left(a^{-1}\right)^{-1}\cdot b^{-1}c^{-k_1+k_2+2k_3} \cdot a^{-1} \\
&= bab^{-1}a^{-1} = c^{k_1}.
\end{align*}
On the other hand, the kernel of the map \(E=\langle{\iota_1, \iota_2\rangle} \rightarrow \langle{j_1, j_2\rangle}\) given by \(\iota_1 \rightarrow j_1, \iota_2 \rightarrow j_2\) is generated by \(\left(\iota_{12}\right)^2\). Calculating \(\left(\iota_{12}\right)^2\) we obtain \(c^{\frac{1}{2}\left(k_2+k_3\right)}\). Using \(2k_1=3\left(k_2+k_3\right)\) we can rewrite this as \(\left(\iota_{12}\right)^2 = c^{k_1/3}\).
Hence the element
$H=\left(\iota_{12}\right)^2=c^{k_1/3}$
is the shortest vertical Heisenberg translation in $E=\<\iota_1, \iota_2\>$.

\end{document}